\theoremstyle{plain}
\newtheorem{thm}{Theorem}[section]
\theoremstyle{plain}
\newtheorem{lem}[thm]{Lemma}
\newtheorem{prop}[thm]{Proposition}
\theoremstyle{definition}
\newtheorem{defi}{Definition}[section]
\newtheorem{rem}{Remark}[section]
\newtheorem*{maintheorem*}{Main Theorem}
\newenvironment{Assumptions}
{
\setcounter{enumi}{0}

\begin{enumerate}}
{\end{enumerate} }
\newcommand{\R}{\ensuremath{\mathbb{R}}}
\newcommand{\goto}{\ensuremath{\rightarrow}}
\newcommand{\grad}{\ensuremath{\nabla}}
\newcommand{\eps}{\ensuremath{\varepsilon}}
\numberwithin{equation}{section} \allowdisplaybreaks
\title[Renormalized entropy solution for degenerate SPDEs]
{Renormalized stochastic  entropy solution for degenerate parabolic-hyperbolic equations with L\'evy noise}
\date{}
\author[ S. R. Behera ]{Soumya Ranjan Behera}
\address[Soumya Ranjan Behera] {\newline 
Department of Mathematics,
Indian Institute of Technology Delhi,
Hauz Khas, New Delhi, 110016, India.}
\email[] {maz198759@iitd.ac.in}
\author[A. K. Majee]{Ananta K. Majee}
\address[Ananta K. Majee]{\newline
Department of Mathematics,
Indian Institute of Technology Delhi,
Hauz Khas, New Delhi, 110016, India. }
\email[]{majee@maths.iitd.ac.in}
\keywords{Stochastic degenerate conservation laws; Renormalized entropy solution; L\'evy noise; Kruzkov's doubling the variables technique; $L^1$- theory }
\thanks{}
\begin{document}
\begin{abstract}
  In this article, we establish the well-posedness theory for renormalized entropy solutions of a degenerate parabolic-hyperbolic PDE perturbed by a multiplicative Levy noise with general $L^1$-data on $\R^d$. By using a suitable approximation procedure based on the vanishing viscosity technique and bounded data, we prove the existence of a renormalized entropy solution to the underlying problem. The uniqueness of the solution is settled by adapting Kru\v{z}kov's doubling the variables technique in the presence of noise. 
\end{abstract}
\maketitle
\section{Introduction}
Let $(\Omega, \mathcal{F}, \mathcal{P}, (\mathcal{F}_t)_{t \in [0,T]})$ be a stochastic basis and $T >0$. We are interested in the Cauchy problem for a nonlinear degenerate parabolic-hyperbolic stochastic PDE of the  type:
\begin{align} \label{eq:stoc}
\begin{cases} 
\displaystyle du(t,x)- \mbox{div} f(u) \,dt-\Delta \Phi(u)\,dt =
\sigma(u)\,dW(t) + \int_{\mathbf{E}} \eta(u;z)\widetilde{N}(dz,dt),~~(x,t) \in \Pi_T, \\
u(0,x) = u_0(x),~~ x\in \R^d,
\end{cases}
\end{align}
where $\Pi_T:= (0,T)\times \R^d$ with $T>0$ fixed, $u_0:\R^d \mapsto \R$ is the given integrable initial
function, $f:\R \mapsto \R^d$ is a given (sufficiently smooth)  flux function 
(see Section~\ref{sec:technical} for the complete list of assumptions), and $\Phi:\R \mapsto \R$ is a given nonlinear function. Regarding this, the basic assumption is that $\Phi(\cdot)$ is nondecreasing 
with $\Phi(0)=0$. Furthermore, $W(t)$ is a one dimensional Brownian noise, and $ \widetilde{N}(dz,dt)= N(dz,dt)-m(dz)\,dt $, 
where $N$ is a Poisson random measure on $(\mathbf{E},\mathcal{E})$ with intensity measure $m(dz)$, where $(\mathbf{E},\mathcal{E},m)$ is a $\sigma$-finite measure space.
Finally, $ u \mapsto \sigma (u)$ and $(u,z)\mapsto \eta(u,z)$ are given real valued functions signifying
the multiplicative nature of the noise.
\vspace{0.1cm}

The equation of the type \eqref{eq:stoc} arises in many different fields where non-Gaussianity plays an important role. For example, it is used to model the phenomenon of two or three-phase flows in porous media \cite{Karlsen-2000-LN} or sedimentation-consilation processes \cite{Burger-1999}. 
 In the absence of noise terms, \eqref{eq:stoc} becomes a deterministic degenerate parabolic–hyperbolic equation in $\R^d$. A number of authors have contributed since then, and we mention few of the works e.g, \cite{Maliki-2010, Carrillo-1999, kal-resibro,cockburn,Evje-2001,Wu-1989,Hudjaev-1969} and
references therein for its entropy solution theory at least when the data are regular enough (say $L^1\cap L^\infty$). For purely $L^1$-data, one cannot expect a solution~(of deterministic degenerate parabolic–hyperbolic equation) with more than $L^1$-regularity and therefore, in general, it is not possible to consider the solution of the underlying PDEs in the distributional sense. The notion of renormalized entropy solution, first introduced  by DiPerna and Lions \cite{Diperna-renormalized} for the study of the Boltzmann equation, was adapted by many authors to study the underlying deterministic problems for $L^1$-data; see for example
 \cite{Karlsen-2004,Wittbold-2015-renormalized,Wittbold-2002-renormalized,Wittbold-2000-renormalized,Chen-2003-renormalized,Blanchard-2001} and references therein. 
\vspace{0.1cm}

The study of stochastic degenerate parabolic-hyperbolic equations has so far been limited to mainly equations with a Brownian noise. In fact, Kim \cite{Kim-2003} extended the Kru\v{z}kov well-posedness theory to one-dimensional balance laws that are driven by additive Brownian noise. For multiplicative noise, one cannot apply straightforward Kru\v{z}kuv's doubling the variables technique to get uniqueness. This issue was settled by Feng and Nualart \cite{nualart:2008}, who
established the uniqueness of the entropy solution by recovering additional information from the vanishing viscosity method. The existence was proven in one spatial dimension by using a stochastic version
of compensated compactness method; see also \cite{BisMaj}.  Debussche and Vovelle \cite{Vovelle2010} introduced kinetic formulation
of such problems and established the well-posedness theory for
multidimensional stochastic balance law via kinetic approach. By introducing the notion of generalized stochastic entropy solution, Bauzet et al. \cite{BaVaWit_2012} established the well-posedness of entropy solution for multi-dimensional balance laws driven by multiplicative noise via the Young measure technique. 
In \cite{Karlsen-2017}, the authors slightly modified the Kru\v{z}kov's entropy inequality, allowing the Kru\v{z}kov's constants to be Malliavin differentiable random variables and proved well-posedness theory for entropy solution---which simplifies some of the proofs of \cite{nualart:2008,BaVaWit_2012}. We mention the works of Chen et al. \cite{Chen-karlsen_2012}, where well-posedness of entropy solution is established in $L^p\cap {\rm BV}$, via BV framework. See also the works of  Dotti et al. \cite{Vovelle-2018}, Vallet et al. \cite{Vallet_2009}, and Bauzet et al. \cite{BaVaWit_JFA}.  Extending the work of \cite{BaVaWit_2012}, Bauzet et al. \cite{BaVaWit_2014} established the existence and uniqueness of solutions of \eqref{eq:stoc} in the case $\eta=0$. By using a vanishing viscosity method, based on the compactness
proposed by the theory of Young measures, they have proved the existence of
entropy solution.  The uniqueness of the solution is obtained via Kruzkov’s doubling variable method. Adapting the notion of kinetic
formulation,  Debussche et al. \cite{martina-2016} developed a well-posedness theory for \eqref{eq:stoc} with $\eta=0$; see also \cite{Martina-2013, Chaudhary-2023}. We also mention the works of  Bhauryal et al. \cite{Koley-2021}, where they have established well-posednes theory and developed a continuous dependence theory in stochastic entropy solution framework for fractional degenerate parabolic-hyperbolic Cauchy problem driven by Brownian noise. 
Very recently, Behera and Majee \cite{Behera-CLT} established the Freidlin-Wentzell type large deviation
principle and central limit theorem for stochastic fractional conservation laws with small multiplicative noise in kinetic formulation framework via weak convergence approach. 

\vspace{0.1cm}

Being relatively new area of pursuit, the study of stochastic scalar balance laws driven by L\'{e}vy noise is even more sparse than the Brownian noise case.
In recent years, Biswas et al. \cite{BisMajKarl_2014, BisMajVal-2019, BisKoleyMaj}, Koley et al. \cite{KoleyMajVa-2017} established the well-posedness theory of the entropy solution within $L^p$-solution framework,  for \eqref{eq:stoc}  with a Poisson noise via the Young measure approach. In \cite{BisKoleyMaj, KoleyMajVa-2017}, the authors also developed a continuous dependence theory on nonlinearities within the BV solution setting. We also mention the work of Bhauryal et al. \cite{Koley-2020}, where they established existence, uniqueness, and continuous dependence theory for multidimensional fractional conservation laws driven by L\'evy noise in the entropy solution framework. Moreover, they established a result on vanishing non-local regularization of scalar stochastic conservation laws. Very recently, the authors in \cite{Behera-boubded-domain} have studied the homogeneous Dirichlet problem for a degenerate parabolic-hyperbolic PDE perturbed by L\'evy noise and established the well-posedness theory of entropy solution via  Kru\v{z}kov’s semi-entropy formulation. We also refer to \cite{SRB-24, Majee-2018, Majee-2018-flux} for various numerical studies of conservation law driven by L\'evy noise. 

\vspace{0.1cm}

However, the study of well-posedness theory (in entropy solution framework) for stochastic balance laws driven by noise for purely $L^1$-data is rare.  Within the existing literature, we can refer to the paper by Lv et al. \cite{Lv-2016-renormalized}, where the authors wanted to establish the existence and uniqueness of the renormalized stochastic entropy solutions of non-homogeneous Dirichlet problem on stochastic scalar conservation law driven by a multiplicative Brownian noise for a general $L^1$-data. 
In this article, taking primary motivation from \cite{Karlsen-2004, Blanchard-2001}, we wish to offer an alternative “pure” $L^1$ well-posedness theory for \eqref{eq:stoc} with purely integrable initial data, based on the notion of renormalized stochastic entropy solutions and the adaptation of classical Kru\v{z}kov's \cite{Kruzkov-1970} method in stochastic setting.  Following the deterministic counterpart \cite{Karlsen-2004}, we first define the notion of renormalized stochastic entropy solution and prove its well-posedness. 
\begin{itemize}
\item[i)] As a first step, we first derive a-priori estimates for viscous solution $u_\eps$ (with regular initial data) of \eqref{eq:viscous-stoc} and its associated Kirchhoff's term involving the Lipschitz continuous truncation function ${\tt T}_{\ell}$ at height $\ell>0$ defined in \eqref{eq:trucn-fn}. Moreover, by identifying the appropriate random Radon measure on $\Pi_T$, and with the help of strong convergence of $u_\eps$ to $u$ in $L^p(\Omega\times(0,T); L^p_{\rm loc}(\R^d))$ for $1\le p<2$ ~(cf.~Theorem \ref{thm:L1-contraction-entropy}), we have shown that $u$ is a renormalized stochastic entropy solution which then turns out, by the uniqueness of entropy solution, that any entropy solution of the underlying problem for initial data $u_0 \in L^1(\R^d)\cap L^\infty(\R^d)$ is indeed a renormalized stochastic entropy solution. To do so, we take a special function $S=S_{\beta, {\tt h}_{\ell, \delta}}$ defined by \eqref{eq:special-fun-S} in the generalized It\^o-l\'evy formula \eqref{eq:ito-formula-viscous} on $u_\eps$ and then pass to the limit as $\delta \goto 0$ and subsequently $\eps\goto 0$. An important ingredient is to pass to the
limit in the martingale term $\mathcal{H}_5$ and the It\^o correction term $\mathcal{H}_7$, which are non-local in nature, coming from the jump noise. Moreover, due to the nonlinear nature of the truncation function ${\tt T}_{\ell}$, one cannot get the desired inequality in a straightforward manner after passing to the limit in these terms. We first take their sum and re-write it into the desired inequality and the associated error terms. With the help of the nondecreasing property of $\eta$ in its first argument, $\eta(0,z)=0$ for all $z\in \mathbf{E}$ together with the fact that $\beta^\prime(0)=0$, and the definition of compensated Poisson random measure, we are able to show that error terms are non-positive, cf.~\eqref{inq:i2j2}.
\item [ii)] By using the analysis of ${\rm i)}$ and the boundedness property of the noise coefficiets i.e., the assumption \ref{A7}~(which is needed to show the dissipation condition of the renormalized measure),~we have shown that the point-wise and $L^1(\Omega \times \Pi_T)$ limit process $\bar{u}$ of the approximate sequence $\{u_n\}_{n\in \mathbb{N}}$, the unique entropy solution to the problem
\eqref{eq:stoch-truncation}, is indeed a renormalized stochastic entropy solutions of \eqref{eq:stoc} for purely $L^1$-initial data.  
\item[iii)]  For uniqueness of renormalized solutions, we follow the methodology of \cite{BisMajKarl_2014, BisMajVal-2019,BaVaWit_2014} and derive a Kato type inequality \eqref{inq:3-uni} by comparing any renormalized solution with the viscous solution based on the stochastic version of Kru\v{z}kov's doubling the variables technique. Since the viscous limit is indeed the unique entropy solution,  by using the Kato inequality for the entropy solution $u_n$, we prove that any renormalized stochastic entropy solution of the underlying problem is the  $L^1$-limit of the sequence $\{u_n\}$. 
\end{itemize}

\vspace{0.1cm} 

The remaining part of this paper is organized as follows: In Section \ref{sec:technical}, we introduce the notion of a renormalized stochastic entropy solution
 for \eqref{eq:stoc},  state the assumptions, detail of the technical
framework, and state our main well-posedness result.  Section \ref{sec:existence} is devoted to the proof of the existence of a renormalized stochastic entropy solution of \eqref{eq:stoc}. In the final section, we complete the uniqueness of the renormalized stochastic entropy solution. 

\section{Technical Framework and statement of the main results}\label{sec:technical} 
Throughout this paper, we use the letter $C$ to denote various generic constant. There are
situations where constants may change from line to line, but the notation is kept unchanged so long as it
does not impact the central idea. For any separable Hilbert Space $H$, we denote by $N_w^2((0,T); H)$ the space of all square integrable $\{\mathcal{F}_t\}$ predictable $H$-valued process $u$ such that $\displaystyle\mathbb{E}\Big[ \int_0^T \|u(t)\|_H^2\,dt\Big] < \infty.$ 

\vspace{0.1cm}

It is well known that weak solutions may be discontinuous and they are not uniquely determined by their initial data. Consequently, an entropy condition must be imposed to single out the physically correct solution.  To define the notion of stochastic entropy solution, we first recall the definition of entropy flux triplet and  Kirchoff's function associated to $\Phi$. 
\begin{defi}[Entropy flux triple]
 	A triplet $(\beta,\zeta,\nu) $ is called an entropy flux triple if $\beta \in C^2(\R) $ and $\beta \ge0$,
 	$\zeta = (\zeta_1,\zeta_2,....\zeta_d):\R \mapsto \R^d$ is a vector valued function, and $ \nu :\R \mapsto \R $ is a scalar valued function such that 
 	\[\zeta'(r) = \beta'(r)f'(r) \quad \text{and}\quad \nu^\prime(r)= \beta'(r)\Phi'(r).\]
 An entropy flux triple $(\beta,\zeta,\nu)$ is called convex if $ \beta^{\prime\prime}(s) \ge 0,~~\forall~s\in \R$.  
\end{defi}
The associated Kirchoff's function of $\Phi$, denoted by 
${\tt G}(x)$, is defined by
 $${\tt G}(x)= \int_0^x \sqrt{\Phi^\prime(r)}\,dr,~~x\in \R\,.$$ 
With the help of  convex entropy flux triplet $(\beta,\zeta,\nu)$, the notion of stochastic entropy solution of \eqref{eq:stoc} is defined as follows~cf.~\cite{BaVaWit_2014,BisMajVal-2019}.

 \begin{defi} [Stochastic entropy solution]
 \label{defi:stochentropsol}
A $ L^2(\R^d )$-valued $\{\mathcal{F}_t: t\geq 0 \}$-predictable stochastic process $u(t)= u(t,x)$ is called a stochastic entropy solution of \eqref{eq:stoc} if
 \begin{itemize}
 \item[(i)] for each $ T>0$, 
 \begin{align*}
 {\tt G}(u) \in L^2((0,T)\times \Omega;H^1(\R^d)), \,\, \text{and} \,\, 
 \underset{0\leq t\leq T}\sup  \mathbb{E}\big[||u(t,\cdot)||_{L^2(\R^d)}^{2}\big] <+ \infty\,;
 \end{align*}
 \item[(ii)] given a non-negative test function  $\psi\in C_{c}^{1,2}([0,\infty )\times\R^d) $ and a convex entropy flux triple $(\beta,\zeta,\nu)$, the following inequality holds:
 \begin{align*}
 &  \int_{\Pi_T} \Big\{ \beta(u(t,x)) \partial_t\psi(t,x) +  \nu(u(t,x))\Delta \psi(t,x) -  \grad \psi(t,x)\cdot \zeta(u(t,x)) \Big\}dx\,dt \notag \\
 & + \int_{\Pi_T} \sigma(u(t,x))\beta^\prime (u(t,x))\psi(t,x)\,dW(t)\,dx
 + \frac{1}{2}\int_{\Pi_T}\sigma^2(u(t,x))\beta^{\prime\prime} (u(t,x))\psi(t,x)\,dx\,dt \notag \\
  &  + \int_{\Pi_T} \int_{\mathbf{E}} \int_0^1 \eta(u(t,x);z)\beta^\prime \big(u(t,x) + \lambda\,\eta(u(t,x);z)\big)\psi(t,x)\,d\lambda\,\widetilde{N}(dz,dt)\,dx  \notag \\
 &\quad +\int_{\Pi_T} \int_{\mathbf{E}}  \int_0^1  (1-\lambda)\eta^2(u(t,x);z)\beta^{\prime\prime} \big(u(t,x) + \lambda\,\eta(u(t,x);z)\big)
 \psi(t,x)\,d\lambda\,m(dz)\,dx\,dt \notag \\
 &  \quad \ge  \int_{\Pi_T} \beta^{\prime\prime}(u(t,x)) |\grad {\tt G}(u(t,x))|^2\psi(t,x)\,dx\,dt - \int_{\R^d} \beta(u_0(x))\psi(0,x)\,dx, \quad \mathbb{P}-\text{a.s}.
 \end{align*}
 \end{itemize}
 \end{defi} 
 In \cite{BaVaWit_2014,BisMajVal-2019}, the authors have shown well-posedness of entropy solution of \eqref{eq:stoc} for  regular initial data $u_0\in L^p(\R^d)$ for $p\ge 2$.  Due to the poor regularity of the initial data $u_0\in L^1(\R^d)$, one cannot expect {\it a-priori} estimates on the  Kirchoff's function as well as on $\nabla u$ and therefore well-posedness result has to be formulated in a generalized solution framework in the sense that whenever the initial data is more regular, the solution concept must coincide with the existing entropy solution framework. One such notion of solution is known as {\em renormalized solution}. Following the deterministic paper \cite{Karlsen-2004}, we 
define the notion of  {\em renormalized stochastic entropy solution} of \eqref{eq:stoc} as follows.
\begin{defi}[Renormalized stochastic entropy solution]\label{defi:renormalized}
A $ L^1(\R^d )$-valued $\{\mathcal{F}_t: t\geq 0 \}$-predictable stochastic process $u(t)= u(t,x)$ is called renormalized stochastic entropy solution of \eqref{eq:stoc} if
 \begin{itemize}
 \item[(i)] for each $ T>0$ and $\ell >0$,
 \begin{align*}
 {\tt G}({\tt T}_{\ell}(u)) \in L^2((0,T)\times \Omega;H^1(\R^d)), \,\, \text{and} \,\, 
 \underset{0\leq t\leq T}\sup  \mathbb{E}\big[||u(t,\cdot)||_{L^1(\R^d)}\big] <+ \infty;  
 \end{align*}
 \item[(ii)]  for any $\ell >0$, and a given non-negative test function  $\psi\in C_{c}^{1,2}([0,\infty )\times\R^d) $ and  convex entropy flux triple $(\beta,\zeta,\nu)$ with $\beta^\prime(0)=0$ and $|\beta^\prime|$ bounded by some constant $K>0$, there exists a random non-negative bounded Radon measure $\mu_{\ell}^K$ on $\Pi_T$ such that
 \begin{align}
 &  \int_{\Pi_T} \Big\{ \beta({\tt T}_{\ell}(u)) \partial_t\psi(t,x) +  \nu({\tt T}_{\ell}(u))\Delta \psi(t,x) -  \grad \psi(t,x)\cdot \zeta({\tt T}_{\ell}(u)) \Big\}dx\,dt \notag \\
 & + \int_{\Pi_T} \sigma({\tt T}_{\ell}(u))\beta^\prime ({\tt T}_{\ell}(u))\psi(t,x)\,dW(t)\,dx
 + \frac{1}{2}\int_{\Pi_T}\sigma^2({\tt T}_{\ell}(u))\beta^{\prime\prime} ({\tt T}_{\ell}(u))\psi(t,x)\,dx\,dt \notag \\
  &  + \int_{\Pi_T} \int_{\mathbf{E}} \int_0^1 \eta({\tt T}_{\ell}(u);z)\beta^\prime \big({\tt T}_{\ell}(u) + \lambda\,\eta({\tt T}_{\ell}(u);z)\big)\psi(t,x)\,d\lambda\,\widetilde{N}(dz,dt)\,dx  \notag \\
 & +\int_{\Pi_T} \int_{\mathbf{E}}  \int_0^1  (1-\lambda)\eta^2({\tt T}_{\ell}(u);z)\beta^{\prime\prime} \big({\tt T}_{\ell}(u) + \lambda\,\eta({\tt T}_{\ell}(u);z)\big)
 \psi(t,x)\,d\lambda\,m(dz)\,dx\,dt \notag \\
 &  \ge  \int_{\Pi_T} \beta^{\prime\prime}({\tt T}_{\ell}(u)) |\grad {\tt G}({\tt T}_{\ell}(u))|^2\psi\,dx\,dt - \int_{\R^d} \beta({\tt T}_{\ell}(u_0))\psi(0,x)\,dx - \int_{\Pi_T} \psi\,d\mu_{\ell}^K(t,x), \quad \mathbb{P}\text{-a.s}; \label{inq:renormalized-entropy-solun}
 \end{align}
 \item[iii)] there exists a sequence $\{{\ell}_j\}$ of real numbers with ${\ell}_j \goto \infty$ as $j\goto \infty$ such that the following dissipation condition holds:
 $$ \lim_{j\goto \infty} \mathbb{E}\Big[\mu_{{\ell}_j}^K(\Pi_T)\Big]=0;$$
 \end{itemize}
 where the Lipschitz continuous truncation function ${\tt T}_{\ell}: \R \goto \R$ at height ${\ell}>0$ is given by
\begin{align}\label{eq:trucn-fn}
{\tt T}_{\ell}(u)=\begin{cases} -{\ell},\quad u<-\ell\,, \\ u,\quad |u| \le \ell\,, \\ \ell,\quad u>\ell\,.
\end{cases}
\end{align}
\end{defi}
\begin{rem}
Since for fixed $\ell >0$, ${\tt T}_{\ell}(u)\in L^\infty(\Omega\times \Pi_T)$, the terms in \eqref{inq:renormalized-entropy-solun} are all well-defined. Moreover, if $u$ is bounded renormalized entropy solution i.e., if $|u|<M$ for some $M>0$, then sending $\ell \goto \infty$ in \eqref{inq:renormalized-entropy-solun}, we see that $u$ is indeed a stochastic entropy solution of \eqref{eq:stoc} in the sense of  Definition \ref{defi:stochentropsol}.
\end{rem}
\begin{rem}\label{rem:about-initial}
A similar argument as in \cite[Remark 2.7]{BaVaWit_2012} and \cite[Lemma 2.1]{BisMaj} reveals that any renormalized entropy solution of \eqref{eq:stoc} satisfies the initial condition in the following sense: for any $\ell>0$, and any non-negative $\psi \in C_c^2(\R^d)$ with $\text{supp}(\psi) = \bar{K}$,~a compact set in $\R^d$, \begin{align*}
    \underset{h \rightarrow 0}{\lim}\,\mathbb{E}\Big[\frac{1}{h}\int_0^h\int_{\bar{K}}\big|{\tt T}_{\ell}(u(t,x)) - {\tt T}_{\ell}(u_0(x))\big|\psi(x)\,dx\,dt\Big] = 0\,.
\end{align*}
\end{rem}
The primary aim of this paper is to establish well-posedness theory for the renormalized stochastic entropy solution of  \eqref{eq:stoc}, and we do so under the following set of assumptions:
 \begin{Assumptions}
 	\item \label{A1} The initial function $u_0$ is a deterministic  integrable function i.e., $u_0\in L^1(\R^d)$.
 	\item \label{A2}  $ \Phi:\R\rightarrow \R$ is a non-decreasing  Lipschitz continuous function with $\Phi(0)=0$. 
	 \item \label{A3}  $ f=(f_1,f_2,\cdots, f_d):\R\rightarrow \R^d$ is a Lipschitz continuous function with $f_k(0)=0$, for 
 	all $1\le k\le d$.
 	\item \label{A4} We assume that $\sigma(0)=0$. Moreover, there exists positive constant $L_{\sigma} > 0$  such that 
 	\begin{align*} 
 	\big| \sigma(u)-\sigma(v)\big|  \leq L_{\sigma} |u-v|, ~\text{for all} \,\,u,v \in \R.
 	\end{align*}  
\item \label{A5}  $\eta(0,z)=0$, for all $z\in \mathbf{E}$, and there exist positive constant $L_\eta>0$ and  a function $h(z)\in L^2(\mathbf{E},m)$ with $0\le h(z)\le 1$ such that
 \begin{align*}
  \big| \eta(u;z)-\eta(v;z)\big|  \leq L_\eta |u-v|h(z),~~~\forall u,v \in \R;~~z\in \mathbf{E}\,.
 \end{align*}
 \item \label{A6}
   $\eta : \R \times \mathbf{E} \rightarrow \R$ is non-decreasing in $\R$.
\item \label{A7}
There exist positive constants $\sigma_b$ and $\eta_b$ such that $|\sigma| \le \sigma_b$ and $|\eta(\cdot~;z)| \le \eta_bh(z)$\,.
 \end{Assumptions}
 We now state the main results of this paper.
 \begin{thm}[Existence]\label{thm:existence}
 Let the assumptions \ref{A1}, \ref{A2}, \ref{A3}, \ref{A4}, \ref{A5}, \ref{A6} and \ref{A7} hold true. Then, there exists a renormalized stochastic entropy solution of \eqref{eq:stoc} in the sense of Definition \ref{defi:renormalized}. 
 \end{thm}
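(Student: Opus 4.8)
The plan is to build the solution by a twofold approximation and then remove both regularizations: regularize the equation by vanishing viscosity and the datum by truncation, so that at the approximate level the classical stochastic entropy theory of \cite{BaVaWit_2014, BisMajVal-2019} applies, and then pass to the limit. The two stages follow items~i) and~ii) of the Introduction. In the first stage I fix bounded data $u_0\in L^1(\R^d)\cap L^\infty(\R^d)$ and show that the associated bounded stochastic entropy solution is already a renormalized stochastic entropy solution, thereby identifying the defect measure $\mu_\ell^K$. In the second stage I approximate a general $L^1$ datum by bounded data and pass to the limit, using the $L^1$-contraction of Theorem~\ref{thm:L1-contraction-entropy} together with assumption~\ref{A7} to recover the dissipation condition.

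For the first stage, let $u_\eps$ denote the viscous solution of \eqref{eq:viscous-stoc}, which satisfies the generalized It\^o--L\'evy formula \eqref{eq:ito-formula-viscous}. I would apply \eqref{eq:ito-formula-viscous} to the composite entropy $S=S_{\beta,{\tt h}_{\ell,\delta}}$ of \eqref{eq:special-fun-S}, namely $\beta$ composed with a smooth $\delta$-regularization ${\tt h}_{\ell,\delta}$ of the truncation ${\tt T}_{\ell}$, then multiply by the test function $\psi$, integrate over $\R^d$, and take expectations. The chain rule gives $S'=\beta'({\tt h}_{\ell,\delta})\,{\tt h}_{\ell,\delta}'$ and $S''=\beta''({\tt h}_{\ell,\delta})({\tt h}_{\ell,\delta}')^2+\beta'({\tt h}_{\ell,\delta})\,{\tt h}_{\ell,\delta}''$, so that as $\delta\to 0$ the factor ${\tt h}_{\ell,\delta}'\to\mathbf{1}_{\{|u_\eps|<\ell\}}$ reproduces the truncated entropy flux $\beta({\tt T}_{\ell}(\cdot))$, $\nu({\tt T}_{\ell}(\cdot))$, $\zeta({\tt T}_{\ell}(\cdot))$ and the Brownian contributions, while the concentration of ${\tt h}_{\ell,\delta}''$ at $\pm\ell$ produces the nonnegative Radon measure $\mu_\ell^K$. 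The parabolic term $\beta''({\tt T}_{\ell}(u))|\grad{\tt G}({\tt T}_{\ell}(u))|^2$ survives on the correct side of the inequality by weak lower semicontinuity, using the a priori bound ${\tt G}({\tt T}_{\ell}(u_\eps))\in L^2(\Omega\times(0,T);H^1)$. I would send $\delta\to 0$ first and then $\eps\to 0$, exploiting the strong $L^p_{\loc}$ convergence $u_\eps\to u$ (for $1\le p<2$) from Theorem~\ref{thm:L1-contraction-entropy}.

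The heart of the argument, and its main obstacle, is the passage to the limit in the two non-local jump contributions: the martingale term $\mathcal{H}_5$ and the It\^o-correction term $\mathcal{H}_7$, where the nonlinearity of ${\tt T}_{\ell}$ prevents a termwise limit. Writing the jump increments through their integral Taylor remainders, $S(u+\eta)-S(u)=\eta\int_0^1 S'(u+\lambda\eta)\,d\lambda$ and $S(u+\eta)-S(u)-S'(u)\eta=\eta^2\int_0^1(1-\lambda)S''(u+\lambda\eta)\,d\lambda$, I would \emph{first sum} $\mathcal{H}_5$ and $\mathcal{H}_7$ and then split the total into the two target jump integrals of \eqref{inq:renormalized-entropy-solun} plus remaining error terms. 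These error terms are produced exactly when a jump of size $\eta$ transports $u_\eps$ across the truncation level $\ell$; invoking that $\eta(\cdot\,;z)$ is nondecreasing with $\eta(0,z)=0$, that $\beta'(0)=0$, and the compensation structure of $\widetilde{N}$, one checks that the errors carry a favourable sign and may be discarded, which is the content of \eqref{inq:i2j2}. This concludes stage one: the bounded entropy solution verifies \eqref{inq:renormalized-entropy-solun} with a nonnegative measure $\mu_\ell^K$.

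In the second stage I truncate the datum, $u_0^n:={\tt T}_{n}(u_0)\in L^1(\R^d)\cap L^\infty(\R^d)$, let $u_n$ be the entropy solution of \eqref{eq:stoch-truncation} (renormalized by stage one), and use the $L^1$-contraction of Theorem~\ref{thm:L1-contraction-entropy} to extract an a.e. and $L^1(\Omega\times\Pi_T)$ limit $\bar u$, which also inherits the supremum-in-time $L^1$ bound and the regularity ${\tt G}({\tt T}_{\ell}(\bar u))\in L^2((0,T)\times\Omega;H^1)$ of Definition~\ref{defi:renormalized}(i). Passing $n\to\infty$ in \eqref{inq:renormalized-entropy-solun} at fixed $\ell$ is then straightforward for the drift and parabolic terms by dominated convergence on the bounded range $[-\ell,\ell]$, and for the two martingale terms by the It\^o and compensated-It\^o isometries; the decisive point is the defect measure. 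Here assumption~\ref{A7} is indispensable: the uniform bounds $|\sigma|\le\sigma_b$ and $|\eta(\cdot\,;z)|\le\eta_b h(z)$ yield a bound on $\mathbb{E}[\mu_\ell^K(\Pi_T)]$ that is uniform in $n$ and quantifies the mass on $\{|u|\sim\ell\}$, so that after passing to a weak-$*$ limit of the measures one obtains $\lim_{j\to\infty}\mathbb{E}[\mu_{\ell_j}^K(\Pi_T)]=0$ along a suitable sequence $\ell_j\to\infty$. This verifies the dissipation condition~(iii) and completes the construction of $\bar u$ as a renormalized stochastic entropy solution.
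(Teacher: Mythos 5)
Your proposal follows essentially the same route as the paper: the same two-stage scheme (vanishing viscosity for bounded data $u_0\in L^1\cap L^\infty$, then truncation ${\tt T}_n(u_0)$ of the general $L^1$ datum with the $L^1$-contraction of Theorem \ref{thm:L1-contraction-entropy}), the same special entropy $S_{\beta,{\tt h}_{\ell,\delta}}$ in the It\^o--L\'evy formula \eqref{eq:ito-formula-viscous} with the defect measure generated by the concentration of ${\tt h}''_{\ell,\delta}$ near $\pm\ell$, the same device of summing the jump martingale and jump compensator terms and discarding the resulting error by the sign argument \eqref{inq:i2j2} (using \ref{A6}, $\eta(0,z)=0$ and $\beta'(0)=0$), and the same use of \ref{A7} plus the Blanchard-type lemma to get the dissipation condition along a sequence $\ell_j\to\infty$. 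The only immaterial deviation is that you take $S$ to be the literal composition $\beta\circ{\tt h}_{\ell,\delta}$ with the corresponding chain rule, whereas the paper defines $S$ through the product rule $S'(r)=\beta'(r)\,{\tt h}'_{\ell,\delta}(r)$ as in \eqref{eq:special-fun-S}; both regularizations have the same limits \eqref{conv:special-S-h} as $\delta\to0$ and produce the same bound $K/\delta$ on the concentration terms, hence the same measure $\mu^K_{\ell}$.
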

\begin{thm}[Uniqueness]\label{thm:uni}
Under the assumptions \ref{A1}-\ref{A6}, there exists at most one renormalized stochastic entropy solution to the Cauchy problem \eqref{eq:stoc}.
\end{thm}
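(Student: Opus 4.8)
The plan is to build an $L^1$-type Kato inequality between an \emph{arbitrary} renormalized stochastic entropy solution $u$ of \eqref{eq:stoc} and the approximating solutions used in the existence proof, and then to pass to the limit so as to identify $u$ with the $L^1$-limit $\bar u$ of the sequence $\{u_n\}$ of entropy solutions of the truncated problem \eqref{eq:stoch-truncation}. Since $\bar u$ is uniquely determined by the construction, this identification forces uniqueness. First I would fix such a $u$ with data $u_0\in L^1(\R^d)$ and recall the viscous solutions $u_\eps$ together with the entropy solutions $u_n$ (with bounded data ${\tt T}_n(u_0)$) and the convergences $u_\eps\to u_n$ and $u_n\to\bar u$ guaranteed by the existence analysis and by Theorem \ref{thm:L1-contraction-entropy}. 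The reason to compare $u$ against the \emph{viscous} solution, rather than directly against $u_n$, is that $u_\eps$ obeys the genuine generalized It\^o--L\'evy formula \eqref{eq:ito-formula-viscous}, which is what makes the stochastic doubling rigorous; this is the same device used in \cite{nualart:2008,BaVaWit_2014}.

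The core computation is Kru\v{z}kov's doubling of variables. I would introduce a second pair $(s,y)$ and a product test function $\psi(t,x)\,\rho_\delta(x-y)\,\varrho_{\delta_0}(t-s)$, apply the renormalized entropy inequality \eqref{inq:renormalized-entropy-solun} for ${\tt T}_\ell(u)(t,x)$ against a smooth convex approximation of the Kru\v{z}kov entropy centred at $u_\eps(s,y)$, apply the It\^o--L\'evy formula \eqref{eq:ito-formula-viscous} for $u_\eps(s,y)$, add the two relations and integrate in all variables. Because \eqref{inq:renormalized-entropy-solun} is only available for entropies with $\beta'(0)=0$ and $|\beta'|\le K$, a point requiring care is to regularise the signum/Kru\v{z}kov entropy while respecting these constraints and to recover the sharp inequality only in the limit; the boundedness of $u_n$ coming from its bounded data lets me take $\ell$ large enough that the truncation is transparent on the relevant range.

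In the doubling I would treat three groups of terms. The convective terms cancel by the standard argument based on $\zeta'=\beta'f'$ and the symmetry of the kernel $\rho_\delta$. The degenerate diffusion terms are handled by Carrillo's device, recasting everything through the Kirchhoff function ${\tt G}$ so that the dissipation $\beta''|\grad {\tt G}|^2$ appears with the correct sign and the mixed second-order contributions are absorbed. The delicate part is the stochastic correction: the Brownian It\^o terms combine into a quantity controlled by $(\sigma({\tt T}_\ell(u))-\sigma(u_\eps))^2\,\beta''$, bounded via \ref{A4} and vanishing as the regularisation of $\beta$ and the mollification are removed, while the martingale part disappears under expectation. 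The non-local L\'evy contributions---the compensated-jump martingale and the associated correction of the type $\mathcal H_5,\mathcal H_7$---must be grouped and rewritten exactly as around \eqref{inq:i2j2}: using the monotonicity \ref{A6}, $\eta(0,z)=0$, $\beta'(0)=0$ and the Lipschitz bound \ref{A5}, the leftover error terms are shown to be non-positive or to vanish in the limit.

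After the mollification limits $\delta_0,\delta\to 0$ and the vanishing-viscosity limit $\eps\to 0$, I would obtain a Kato inequality comparing ${\tt T}_\ell(u)$ with $u_n$, carrying the extra renormalization defect $\mathbb E[\mu_\ell^K(\Pi_T)]$. Passing $\ell=\ell_j\to\infty$ along the sequence of Definition \ref{defi:renormalized}(iii) annihilates this defect via the dissipation condition, with ${\tt T}_{\ell_j}(u)\to u$ pointwise and in $L^1$; the boundedness assumption \ref{A7} on $\sigma$ and $\eta$ is precisely what is needed to control the stochastic contributions uniformly in $\ell$. This leaves $\mathbb E\int_{\R^d}|u-u_n|\,\psi\,dx$ dominated by $\mathbb E\int_{\R^d}|u_0-{\tt T}_n(u_0)|\,dx$ plus vanishing terms, and letting $n\to\infty$ forces $u=\bar u$ almost surely, hence uniqueness. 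I expect the principal obstacle to be the simultaneous control of the non-local L\'evy error terms and of the renormalization measure $\mu_\ell^K$: one must ensure the jump-noise corrections do not obstruct the limit $\ell\to\infty$, which is exactly where the structural hypotheses \ref{A5}--\ref{A7} together with $\beta'(0)=0$ are indispensable.
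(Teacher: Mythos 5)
Your proposal follows essentially the same route as the paper: compare the arbitrary renormalized solution $u$ against the viscous solution (because only the viscous solution obeys the generalized It\^o--L\'evy formula \eqref{eq:ito-formula-viscous}), run the stochastic Kru\v{z}kov doubling to obtain a Kato-type inequality of the form \eqref{inq:3-uni} carrying the defect measure $\mu_{\ell}^{u}$, kill the defect by sending $\ell=\ell_j\to\infty$ along the dissipation sequence of Definition \ref{defi:renormalized}(iii), then take $v=u_n$ (entropy solution with data ${\tt T}_n(u_0)$), apply Gronwall and let $n\to\infty$ to identify $u$ with $\bar u$. The structure, the role of each hypothesis for the jump terms ($\eta$ nondecreasing, $\eta(0,z)=0$, $\beta'(0)=0$, the Lipschitz bound \ref{A5}), and the final identification argument all match the paper's proof.

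One correction: you twice invoke \ref{A7} (boundedness of $\sigma$ and $\eta$) as "indispensable" for controlling the stochastic contributions uniformly in $\ell$. The theorem is stated under \ref{A1}--\ref{A6} only, and the paper's uniqueness proof never uses \ref{A7}: that assumption enters solely in the \emph{existence} proof, to verify the dissipation condition \eqref{inq:dissipation-measure} for the constructed solution. In the uniqueness argument the dissipation of $\mu^{u}_{\ell_j}$ is not something you must prove --- it is handed to you as part (iii) of the definition of a renormalized solution --- and the stochastic error terms in the doubling vanish using only \ref{A4}--\ref{A5} (the Brownian correction is of order $L_\sigma^2\,\xi$ for the entropy regularization parameter $\xi$, and the L\'evy corrections are handled as around \eqref{inq:i2j2} via \ref{A5}, \ref{A6} and $\beta'(0)=0$). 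As written, your argument would prove a weaker statement than Theorem \ref{thm:uni}; dropping the appeal to \ref{A7} repairs this at no cost.
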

\begin{rem}
The assumptions \ref{A6} and \ref{A7} are crucial for the existence of the renormalized solution. In particular, \ref{A6} is needed to handle the error terms coming from the martingale and It\^o correction terms corresponding to the jump noise; see \eqref{inq:i2j2}. The assumption \ref{A7} is only used to show the dissipation condition of the corresponding renormalized measure cf.~\eqref{inq:dissipation-measure}. We also point out that for well-posedness of entropy solution of the underlying problem with bounded and integrable initial data, assumptions \ref{A2}-\ref{A5} with $L_\eta <1$ are sufficient cf.~\cite{BisMajVal-2019}---however, since we are assuming the more stronger assumption \ref{A6}, we don't need the condition $L_\eta<1$.  
\end{rem}
\begin{rem}
The condition $\beta^{\prime}(0)=0$ in the definition of renormalized solution for \eqref{eq:stoc} is essential to manipulate the error terms coming from the martingale and It\^o correction terms corresponding to the jump noise. In particular, it is used to get the inequality \eqref{inq:i2j2} along with the assumption \ref{A6}. However, for SPDE \eqref{eq:stoc} with only Brownian noise (i.e., $\eta=0$), the condition $\beta^\prime(0)=0$ is not required. 
\end{rem}

\section{Existence of renormalized stochastic entropy solution}\label{sec:existence}
In this section, we prove Theorem \ref{thm:existence}.
We divide the proof into two cases. In the first case, we show that the unique entropy solution of the underlying problem for bounded and integrable initial data is indeed a renormalized stochastic entropy solution---which then indicates that the notion of renormalized solution is indeed a generalized solution framework. In the second case, we prove the existence results for a general integrable initial datum by using a suitable approximations method together with the analysis of the first case.
\subsection{Existence of renormalized solution for bounded and integrable initial datum} \label{subsec:1-existence}

In this subsection, we show that if $u_0\in L^\infty(\R^d) \cap L^1(\R^d)$, then any stochastic entropy solution of \eqref{eq:stoc} is indeed a renormalized stochastic entropy solution of the underlying problem. To do so, we consider the viscous approximation of  \eqref{eq:stoc}:  for $\eps>0$
 \begin{align}
 	 du_\eps(t,x) -\Delta \Phi_\eps(u_\eps(t,x))\,dt - \mbox{div}_x f(u_\eps(t,x)) \,dt&= \sigma(u_\eps(t,x))\,dW(t) + \int_{\mathbf{E}} \eta(u_\eps(t,x);z)\widetilde{N}(
 	 dz,dt),\label{eq:viscous-stoc} \\
 	 u_\eps(0,x)&=u_0(x), \notag
\end{align}
where $\Phi_\eps(x)=\Phi(x)+ \eps x$, and $u_0\in  L^\infty(\R^d) \cap L^1(\R^d)$. One can follow the argument presented in \cite{BaVaWit_2014, BisMajVal-2019} to ensure the existence of a weak solution for the problem \eqref{eq:viscous-stoc}. More precisely, we have the following proposition.
\begin{prop}\label{prop:vanishing viscosity-solution}
Let the assumptions \ref{A2}-\ref{A6} hold true and $u_0 \in L^1(\R^d) \cap L^\infty(\R^d)$. Then, for any $\eps>0$, there exists a unique weak solution $u_\eps \in N_w^2(0,T,H^1(\R^d))$  to the problem \eqref{eq:viscous-stoc}. Moreover,
$u_\eps \in L^\infty(0,T;L^2(\Omega \times\R^d))$ and there exists a constant $C>0$, independent of $\eps$, such that
\begin{align*}
	 \sup_{0\le t\le T} \mathbb{E}\Big[\big\|u_\eps(t)\big\|_{L^2(\R^d)}^2\Big]  + \eps \int_0^T \mathbb{E}\Big[\big\|\grad u_\eps(s)\big\|_{L^2(\R^d)}^2\Big]\,ds
	 + \int_0^T \mathbb{E}\Big[\big\|\grad {\tt G}(u_\eps(s))\big\|_{L^2(\R^d)}^2\Big]\,ds \le C\,.
\end{align*} 
\end{prop}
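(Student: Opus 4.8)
The plan is to separate the two assertions: existence/uniqueness of $u_\eps$ can be imported from the cited literature, while the $\eps$-uniform bound is where the real work lies. Since $\Phi_\eps'(r)=\Phi'(r)+\eps\ge \eps>0$, the regularized operator $u\mapsto \Delta\Phi_\eps(u)+\mbox{div}\,f(u)$ is uniformly (non-degenerate) parabolic with Lipschitz nonlinearities under \ref{A2}--\ref{A3}, and the noise coefficients are globally Lipschitz by \ref{A4}--\ref{A5}. Hence the existence and uniqueness of a weak solution $u_\eps\in N_w^2(0,T,H^1(\R^d))$ follow from the variational/monotone-operator framework combined with the Galerkin-plus-compactness argument developed in \cite{BaVaWit_2014, BisMajVal-2019}, which I would simply invoke. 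I would then devote the remaining effort to the a priori estimate. Observe first that $u_0\in L^1(\R^d)\cap L^\infty(\R^d)\subset L^2(\R^d)$ (since $\|u_0\|_{L^2}^2\le \|u_0\|_{L^\infty}\|u_0\|_{L^1}$), so $\|u_0\|_{L^2}$ is finite and the claimed inequality is meaningful.

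For the estimate, the main tool is the generalized It\^o--L\'evy formula \eqref{eq:ito-formula-viscous} applied to the functional $u\mapsto\|u\|_{L^2(\R^d)}^2$, i.e.\ with the choice $S(x)=x^2$. First I would compute the deterministic (drift) contributions. After an integration by parts on $\R^d$, the diffusion term yields
\[
-2\int_{\R^d}\Phi_\eps'(u_\eps)|\grad u_\eps|^2\,dx=-2\eps\big\|\grad u_\eps\big\|_{L^2(\R^d)}^2-2\big\|\grad{\tt G}(u_\eps)\big\|_{L^2(\R^d)}^2,
\]
using $\grad{\tt G}(u_\eps)=\sqrt{\Phi'(u_\eps)}\,\grad u_\eps$; these two dissipative terms are precisely the quantities carrying the favourable sign on the left of the claimed estimate. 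The convective term $\int_{\R^d}\mbox{div}\,f(u_\eps)\,u_\eps\,dx$ vanishes, because $f(u_\eps)\cdot\grad u_\eps$ is a perfect divergence (having a vector potential with derivative $f$) and the contribution at infinity is killed by the decay of $u_\eps$.

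Next I would treat the stochastic contributions. The Brownian and compensated-Poisson martingale terms have zero expectation and drop out after taking $\mathbb{E}[\cdot]$. The two It\^o correction terms are controlled by the linear-growth consequences of the Lipschitz hypotheses: the Brownian correction $\int_{\R^d}\sigma^2(u_\eps)\,dx$ is bounded by $L_\sigma^2\|u_\eps\|_{L^2}^2$ via \ref{A4}, and the jump correction $\int_{\R^d}\int_{\mathbf E}\eta^2(u_\eps;z)\,m(dz)\,dx$ is bounded by $L_\eta^2\|h\|_{L^2(\mathbf E,m)}^2\|u_\eps\|_{L^2}^2$ via \ref{A5}. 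Collecting everything and setting $C_0:=L_\sigma^2+L_\eta^2\|h\|_{L^2(\mathbf E,m)}^2$, I arrive at
\[
\mathbb{E}\big\|u_\eps(t)\big\|_{L^2}^2+2\eps\int_0^t\mathbb{E}\big\|\grad u_\eps\big\|_{L^2}^2\,ds+2\int_0^t\mathbb{E}\big\|\grad{\tt G}(u_\eps)\big\|_{L^2}^2\,ds\le\|u_0\|_{L^2}^2+C_0\int_0^t\mathbb{E}\big\|u_\eps\big\|_{L^2}^2\,ds.
\]
Dropping the (nonnegative) gradient integrals and applying Gr\"onwall's inequality to $t\mapsto\mathbb{E}\|u_\eps(t)\|_{L^2}^2$ gives $\sup_{0\le t\le T}\mathbb{E}\|u_\eps(t)\|_{L^2}^2\le\|u_0\|_{L^2}^2\,e^{C_0T}$. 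Since $C_0$ and this bound are independent of $\eps$, substituting back into the displayed inequality controls the two gradient integrals by the same $\eps$-free constant $C=\|u_0\|_{L^2}^2(1+C_0T\,e^{C_0T})$, which is exactly the assertion.

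The hard part will be the rigorous justification of the It\^o--L\'evy formula for $\|u_\eps(t)\|_{L^2}^2$, since $u_\eps$ lives only in $N_w^2(0,T,H^1(\R^d))$ and the formula is applied to an unbounded functional on an unbounded domain. I would handle this through the Gelfand triple $H^1(\R^d)\hookrightarrow L^2(\R^d)\hookrightarrow H^{-1}(\R^d)$ and the infinite-dimensional It\^o formula for the square of the $L^2$-norm (as in the variational theory of SPDEs with jumps), checking that the drift lies in $L^2(0,T;H^{-1})$ and that the diffusion and jump coefficients are Hilbert--Schmidt valued---all guaranteed by \ref{A2}--\ref{A5}. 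A secondary technical point is the localization/decay argument on $\R^d$ that makes the convective term vanish and the diffusion term nonnegative; I would justify it by a spatial cut-off followed by passing to the limit, exploiting the $H^1$-regularity of $u_\eps$.
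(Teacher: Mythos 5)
Your proposal is correct and follows essentially the same route as the paper: the paper offers no proof of this proposition at all, simply invoking \cite{BaVaWit_2014, BisMajVal-2019} for both the existence/uniqueness of $u_\eps$ and the uniform bound, and your energy argument (It\^o formula for $\|u_\eps\|_{L^2}^2$ via the Gelfand triple, integration by parts giving the $\eps\|\grad u_\eps\|^2$ and $\|\grad {\tt G}(u_\eps)\|^2$ dissipation, vanishing of the convective term as a perfect divergence, Lipschitz control of the It\^o corrections, and Gr\"onwall) is precisely the standard derivation carried out in those references. The details you supply, including the identity $\grad {\tt G}(u_\eps)=\sqrt{\Phi'(u_\eps)}\,\grad u_\eps$ and the bound $\int_{\mathbf E}\eta^2(u_\eps;z)\,m(dz)\le L_\eta^2\|h\|_{L^2(\mathbf E,m)}^2 u_\eps^2$, are accurate and yield an $\eps$-independent constant as required.
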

Following the proof as in \cite{BaVaWit_2014, BisMajVal-2019}, one arrives at the following theorem.
\begin{thm}\label{thm:L1-contraction-entropy}
Let the assumptions \ref{A2}-\ref{A6} hold true, $u_0 \in L^1(\R^d) \cap L^\infty(\R^d)$ and $u_\eps(\cdot)$ be the unique weak solution of \eqref{eq:viscous-stoc}.  Then, 
\begin{itemize}
\item[i)] $u_\eps$ converges to $u$
in $L^p(\Omega\times (0,T); L^p_{{\rm loc}}(\R^d))$ for $1\le p<2$, where $u$ is a unique stochastic entropy solution of \eqref{eq:stoc} in the sense of Definition \ref{defi:stochentropsol}. In particular, $u_\eps \goto u$ $\mathbb{P}$-a.s. and for a.e. $(t,x)\in \Pi_T$. 
\item[ii)] The following contraction principle holds: for a.e. $t\in (0,T)$, there exists a constant $C>0$ such that
\begin{align}
\mathbb{E}\Big[\int_{\R^d}\big| u(t,x)-v(t,x)\big|\,dx \Big] \le C \int_{\R^d} |u_0(x)-v_0(x)|\,dx\,, \label{inq:contraction-entropy-solun}
\end{align}
where $v(t,x)$ is the unique stochastic entropy solution of \eqref{eq:stoc} corresponding to the initial condition $v_0\in L^\infty(\R^d) \cap L^1(\R^d)$. 
\end{itemize}
\end{thm}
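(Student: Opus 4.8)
The plan is to realize $u$ as a vanishing-viscosity limit of $u_\eps$ through the Young-measure technique of \cite{BaVaWit_2014, BisMajVal-2019}, and to read off the contraction \eqref{inq:contraction-entropy-solun} as a byproduct of the uniqueness argument. The starting data are the $\eps$-uniform bounds of Proposition~\ref{prop:vanishing viscosity-solution}. First I would apply the generalized It\^o--L\'evy formula to $\beta(u_\eps)$, for a convex entropy $\beta$ and a nonnegative $\psi\in C_c^{1,2}$, to produce the approximate entropy inequality satisfied by the viscous solution of \eqref{eq:viscous-stoc}: it differs from the inequality of Definition~\ref{defi:stochentropsol} only by the viscous dissipation term $\eps\int_{\Pi_T}\beta^{\prime\prime}(u_\eps)|\grad u_\eps|^2\psi\,dx\,dt$ and by the parabolic defect $\int_{\Pi_T}\beta^{\prime\prime}(u_\eps)|\grad {\tt G}(u_\eps)|^2\psi\,dx\,dt$. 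By Proposition~\ref{prop:vanishing viscosity-solution} the former is $O(\eps)$, while the latter is bounded uniformly in $\eps$.

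Next I would pass to a Young-measure limit. The bound $\sup_{t}\mathbb{E}\|u_\eps\|_{L^2(\R^d)}^2\le C$ makes $\{u_\eps\}$ bounded in $L^2(\Omega\times\Pi_T)$, so by the theory of Young measures adapted to the predictable setting one extracts a subsequence and a Young measure $\nu=(\nu_{\omega,t,x})$ with $g(u_\eps)\rightharpoonup \int_{\R} g(\xi)\,d\nu_{\omega,t,x}(\xi)$ for every continuous $g$ of at most linear growth. The Kirchhoff bound $\int_0^T\mathbb{E}\|\grad {\tt G}(u_\eps)\|_{L^2(\R^d)}^2\,ds\le C$ then lets me identify the weak limit of $\grad {\tt G}(u_\eps)$ and pass to the limit in the second-order term, using convexity and weak lower semicontinuity for the defect. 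Sending $\eps\goto 0$ and discarding the nonnegative viscous term yields that $\nu$ is a measure-valued (generalized) stochastic entropy solution of \eqref{eq:stoc}.

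The crux is to show that $\nu$ collapses to a Dirac mass. For this I would run Kru\v{z}kov's doubling of variables in the stochastic framework, comparing the measure-valued solution with itself in two entropy variables, carrying the Brownian It\^o correction, the non-local martingale and It\^o-correction terms from the compensated Poisson measure, and the degenerate second-order operator through both the doubling and the regularization limit. Under \ref{A2}--\ref{A6}, using the Lipschitz bounds \ref{A4}--\ref{A5} and the monotonicity \ref{A6}, the residual noise terms are controlled and a Gronwall argument forces $\nu_{\omega,t,x}=\delta_{u(\omega,t,x)}$ for a unique predictable $L^2$-valued process $u$, the unique stochastic entropy solution. Once $\nu$ is Dirac, Young-measure convergence becomes convergence in measure, hence (along a subsequence) $\mathbb{P}$-a.s. and a.e.\ convergence; the uniform $L^2$-bound gives equi-integrability of $|u_\eps|^p$ for $p<2$, so by Vitali's theorem $u_\eps\goto u$ in $L^p(\Omega\times(0,T);L^p_{\rm loc}(\R^d))$, which proves part~i). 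For part~ii), the same doubling calculation applied to two entropy solutions $u,v$ with data $u_0,v_0$, with the entropy approaching $|\cdot|$ and the spatial weight approaching a constant, makes the flux and diffusion contributions vanish in the doubling limit, absorbs the noise terms via \ref{A4}--\ref{A5} and Gronwall, and leaves exactly \eqref{inq:contraction-entropy-solun}.

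The main obstacle will be this doubling step for the L\'evy-driven degenerate equation. The non-local jump terms do not interact simply with the Kru\v{z}kov entropy, so one must combine the martingale and It\^o-correction contributions and exploit the monotonicity of $\eta$ together with the structure of the compensated measure to give the leftover terms a favorable sign, while simultaneously tracking the parabolic defect $\grad {\tt G}$ through the degenerate diffusion---precisely the delicate analysis carried out in \cite{BaVaWit_2014, BisMajVal-2019}.
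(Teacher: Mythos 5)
Your proposal is correct and follows exactly the route the paper relies on: the paper does not prove Theorem \ref{thm:L1-contraction-entropy} itself but cites \cite{BaVaWit_2014, BisMajVal-2019}, whose argument is precisely the vanishing-viscosity/Young-measure scheme you describe (viscous entropy inequality, measure-valued limit, Dirac collapse via stochastic Kru\v{z}kov doubling with the jump terms handled through the compensated measure, Vitali for the $L^p$, $p<2$, convergence, and the $L^1$-contraction read off from the doubling with the Kru\v{z}kov entropy). No discrepancy to report.
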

\subsubsection{\bf A-priori estimates:} We first derive some essential a-priori estimate for $u_\eps$ and related Kirchhoff's term.
For any $C^2(\R)$-function $S$, define $\zeta^S:\big( \zeta_1^S,\ldots, \zeta_d^S\big): \R \goto \R^d$ and $\nu^S:\R\goto \R$ by
$$ \zeta_i^S(r)=\int_0^r S^\prime(s) f_i^\prime(s)\,ds~~(1\le i\le d),\quad \nu^S(r)= \int_0^r \Phi^\prime(s) S^\prime(s)\,ds.$$ Then for any $0\le \psi\in C_c^\infty([0,\infty)\times \R^d)$, one has, $\mathbb{P}$-a.s.,
 \begin{align}
 &  \int_{\Pi_T} \Big\{ S(u_\eps) \partial_t\psi(t,x) +  \nu^S(u_\eps)\Delta \psi(t,x) -  \grad \psi(t,x)\cdot \zeta^S(u_\eps) + \eps \Delta S(u_\eps) \psi(t,x) \Big\}dx\,dt \notag \\
 & + \int_{\Pi_T} \sigma(u_\eps)S^\prime (u_\eps)\psi(t,x)\,dW(t)\,dx
 + \frac{1}{2}\int_{\Pi_T}\sigma^2(u_\eps)S^{\prime\prime} (u_\eps)\psi(t,x)\,dx\,dt \notag \\
  &  + \int_{\Pi_T} \int_{\mathbf{E}} \big\{S(u_\eps+\eta(u_\eps;z))-S(u_\eps)\big\}\psi(t,x)\,d\lambda\,\widetilde{N}(dz,dt)\,dx  \notag \\
 & +\int_{\Pi_T} \int_{\mathbf{E}} \big\{S(u_\eps+\eta(u_\eps;z))-S(u_\eps)-\eta(u_\eps;z)S^\prime(u_\eps)\big\}
 \psi(t,x)\,d\lambda\,m(dz)\,dx\,dt \notag \\
 & =  \int_{\Pi_T} S^{\prime\prime}(u_\eps)\big( |\grad {\tt G}(u_\eps)|^2 + \eps |\nabla u_\eps|^2 \big)\psi\,dx\,dt - \int_{\R^d} S(u_0)\psi(0,x)\,dx + \int_{\R^d} S(u_\eps(T,x))\psi(T,x)\,dx\,.\label{eq:ito-formula-viscous}
 \end{align}
By an approximations argument, \eqref{eq:ito-formula-viscous} holds for any $S\in W^{2,\infty}(\R)$.
\begin{lem}
Let $u_\eps$ be the weak solution of \eqref{eq:viscous-stoc} with initial data $u_0\in L^1\cap L^\infty(\R^d)$. Then the following {\it a-priori} estimate hold:
\begin{align}
& \sup_{0\le t\le T} \mathbb{E}\Big[\|u_\eps(t)\|_{L^1(\R^d)}\Big] \le \|u_0\|_{L^1(\R^d)}\,, \label{inq:L1}  \\
& \mathbb{E}\Big[ \int_{\Pi_T}  \Big( |\nabla {\tt G}({\tt T}_{\ell}(u_\eps(s,x)))|^2 + \eps |\nabla  {\tt T}_{\ell}(u_\eps(s,x))|^2\Big)\,dx\,ds\Big] \le C(\ell)\,, \label{inq:L1-kirchoff}
\end{align}
where $C(\ell):= \ell \|u_0\|_{L^1(\R^d)} + T \underset{0\le t\le T}\sup\, \mathbb{E}\Big[\|u_\eps(t)\|_{L^2(\R^d)}^2\Big]$.
\end{lem}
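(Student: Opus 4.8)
The plan is to apply the It\^o--L\'evy identity \eqref{eq:ito-formula-viscous} (on the time interval $[0,t]$, so that the terminal term reads $\int_{\rd}S(u_\eps(t))\psi\,dx$) with two carefully chosen renormalizers $S$ and a purely spatial cut-off $\psi(t,x)=\phi_R(x)$, where $0\le\phi_R\le1$, $\phi_R\equiv1$ on the ball $B_R$, $\supp\phi_R\subset B_{2R}$, $|\grad\phi_R|\le C/R$ and $|\Delta\phi_R|\le C/R^2$. Taking expectation kills the stochastic integral and the compensated Poisson integral, and (since $\partial_t\phi_R=0$) leaves an identity of the schematic form $\mathbb{E}\int_{\rd}S(u_\eps(t))\phi_R\,dx=\mathbb{E}\int_{\rd}S(u_0)\phi_R\,dx+\mathcal B_R+\mathcal I_\sigma+\mathcal I_\eta-\mathcal D$, where $\mathcal B_R$ collects the flux and viscous boundary terms (each a quantity controlled by $|u_\eps|$ tested against $\grad\phi_R$ or $\Delta\phi_R$), $\mathcal D=\mathbb{E}\int_0^t\!\int_{\rd}S''(u_\eps)(|\grad{\tt G}(u_\eps)|^2+\eps|\grad u_\eps|^2)\phi_R\,dx\,ds\ge0$ is the parabolic dissipation, and $\mathcal I_\sigma=\tfrac12\mathbb{E}\int_0^t\!\int_{\rd}\sigma^2(u_\eps)S''(u_\eps)\phi_R$, $\mathcal I_\eta=\mathbb{E}\int_0^t\!\int_{\rd}\!\int_{\mathbf E}[S(u_\eps+\eta)-S(u_\eps)-\eta S'(u_\eps)]\phi_R\,m(dz)$ are the two It\^o corrections.

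For \eqref{inq:L1} I would take $S=S_\delta$, a convex $C^2$ approximation of $|\cdot|$ with $S_\delta(0)=0$, $|S_\delta'|\le1$, $S_\delta''\ge0$, $\supp S_\delta''\subset[-\delta,\delta]$ and $\|S_\delta''\|_\infty\le C/\delta$. The dissipation $\mathcal D\ge0$ enters with a minus sign and is discarded. The crucial step is to show that both It\^o corrections vanish as $\delta\goto0$. Since $\sigma(0)=0$ gives $\sigma^2(u_\eps)\le L_\sigma^2u_\eps^2$ (assumption \ref{A4}) and $S_\delta''$ is supported in $\{|u_\eps|\le\delta\}$, the Brownian correction is pointwise $\le CL_\sigma^2\delta$, so $\mathcal I_\sigma\goto0$. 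The Poisson correction, written in its exact Taylor form $\mathcal I_\eta=\mathbb{E}\int_0^t\!\int_{\rd}\!\int_{\mathbf E}\eta^2\int_0^1(1-\lambda)S_\delta''(u_\eps+\lambda\eta)\,d\lambda\,\phi_R\,m(dz)\ge0$ is the genuine difficulty, treated in the next paragraph. Granting $\mathcal I_\eta\goto0$, sending $\delta\goto0$ yields $\mathbb{E}\int_{\rd}|u_\eps(t)|\phi_R\,dx\le\|u_0\|_{L^1(\rd)}+\mathcal B_R$, and since $\mathcal B_R$ is bounded by $CR^{-1}\,\mathbb{E}\int_0^t\!\int_{\{R\le|x|\le2R\}}|u_\eps|\,dx\,ds$ (via the Lipschitz bounds on $\Phi_\eps$ and $f$), it tends to $0$ as $R\goto\infty$ by the integrability of $u_\eps$ guaranteed by the construction. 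This gives \eqref{inq:L1}.

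The main obstacle is the non-local Poisson correction, and it is exactly here that \ref{A6} is needed. On the support of $S_\delta''(u_\eps+\lambda\eta)$ one has $|u_\eps+\lambda\eta|\le\delta$; because $\eta(\cdot;z)$ is non-decreasing with $\eta(0,z)=0$ (assumption \ref{A6}), $u_\eps$ and $\eta(u_\eps;z)$ carry the same sign, whence $|u_\eps+\lambda\eta|\ge|u_\eps|$ and therefore $|u_\eps|\le\delta$ on that set. Combining $|\eta(u_\eps;z)|\le L_\eta|u_\eps|h(z)\le L_\eta\delta\,h(z)$ (assumption \ref{A5}) with $\|S_\delta''\|_\infty\le C/\delta$ produces the pointwise bound $\eta^2 S_\delta''(u_\eps+\lambda\eta)\le CL_\eta^2\delta\,h(z)^2$; integrating against $m(dz)$ (recall $h\in L^2(\mathbf E,m)$) and over $[0,t]\times\supp\phi_R\times\Omega$ leaves a quantity of order $\delta$, which vanishes. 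This is precisely the mechanism by which \ref{A6} replaces the more restrictive condition $L_\eta<1$.

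Finally, for \eqref{inq:L1-kirchoff} I would choose the convex $W^{2,\infty}$ renormalizer determined by $S''=\mathbf 1_{[-\ell,\ell]}$, so that $S'={\tt T}_{\ell}$ and $S''(u_\eps)(|\grad{\tt G}(u_\eps)|^2+\eps|\grad u_\eps|^2)=|\grad{\tt G}({\tt T}_{\ell}(u_\eps))|^2+\eps|\grad{\tt T}_{\ell}(u_\eps)|^2$, i.e. $\mathcal D$ becomes exactly the quantity to be estimated. Rearranging the identity to isolate $\mathcal D$ and letting $R\goto\infty$ (monotone convergence on $\mathcal D$, and $\mathcal B_R\goto0$ as before), the remaining terms are controlled by: the initial contribution $\mathbb{E}\int_{\rd}S(u_0)\,dx\le\ell\|u_0\|_{L^1(\rd)}$ since $S(r)\le\ell|r|$; the terminal term $-\mathbb{E}\int_{\rd}S(u_\eps(T))\phi_R\,dx\le0$; and the two It\^o corrections, which using $S''\le1$, $\sigma^2(u_\eps)\le L_\sigma^2u_\eps^2$, $\eta^2\le L_\eta^2u_\eps^2h^2$ and $\int_{\mathbf E}h^2\,m(dz)<\infty$ are bounded by $CT\sup_{0\le t\le T}\mathbb{E}\|u_\eps(t)\|_{L^2(\rd)}^2$. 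Adding these yields the asserted constant $C(\ell)=\ell\|u_0\|_{L^1(\rd)}+T\sup_{t}\mathbb{E}\|u_\eps(t)\|_{L^2(\rd)}^2$.
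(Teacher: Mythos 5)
Your proposal is correct and follows essentially the same route as the paper: the paper's renormalizers $S(u)=\frac{1}{\ell}\int_0^u{\tt T}_\ell(r)\,dr$ (for \eqref{inq:L1}) and $S(u)=\int_0^u{\tt T}_\ell(r)\,dr$ (for \eqref{inq:L1-kirchoff}) are exactly your $S_\delta$ and your choice $S''=\mathbf{1}_{[-\ell,\ell]}$, the spatial cut-off limit is the same, and the jump It\^o correction is killed by the same sign-matching mechanism from \ref{A5}--\ref{A6}. The only cosmetic difference is that the paper bounds $u_\eps^2S''(u_\eps+\lambda\eta(u_\eps;z))\le(u_\eps+\lambda\eta(u_\eps;z))^2S''(u_\eps+\lambda\eta(u_\eps;z))\le\ell$ directly, whereas you transfer the smallness $|u_\eps+\lambda\eta(u_\eps;z)|\le\delta$ on the support of $S_\delta''$ to $|u_\eps|\le\delta$ before invoking \ref{A5}; these are the same estimate.
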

\begin{proof}
Let $0\le \phi\in C_c^\infty(\R^d)$. Taking $S(u)= \displaystyle \frac{1}{\ell} \int_0^u {\tt T}_\ell(r)\,dr$ in \eqref{eq:ito-formula-viscous} where ${\tt T}_{\ell}(\cdot)$ is defined in \eqref{eq:trucn-fn},  we have
\begin{align}
& \mathbb{E}\Big[ \int_{\R^d} \Big( \frac{1}{\ell} \int_0^{u_\eps(t,x)} {\tt T}_\ell(r)\,dr\Big)\phi(x)\,dx\Big] -  \int_{\R^d} \Big( \frac{1}{\ell} \int_0^{u_0(x)} {\tt T}_\ell(r)\,dr\Big)\phi(x)\,dx \notag \\
& \le C \mathbb{E}\Big[ \int_0^t \int_{\R^d} \big( |u_\eps(s,x)| + \eps \big) |\Delta \phi(x)|\,dx\,ds \Big] +   C \mathbb{E}\Big[ \int_0^t \int_{\R^d}  |u_\eps(s,x)|  |\nabla \phi(x)|\,dx\,ds \Big]\notag \\
& + C \mathbb{E}\Big[ \int_0^t \int_{\R^d}  u^2_\eps(s,x)S^{\prime\prime}(u_\eps(s,x)) \phi(x)\,dx\,ds\Big] \notag \\
& + C\,\mathbb{E}\Big[ \int_0^t \int_{\R^d} \int_{\mathbf{E}}  \int_0^1  (1-\lambda)u_\eps^2(s,x)S^{\prime\prime} \big(u_\eps + \lambda\,\eta(u_\eps;z)\big)
 h^2(z) \phi(x)\,d\lambda\,m(dz)\,dx\,ds\Big]\equiv \sum_{i=1}^4 \mathcal{A}_i\,. \label{esti:1-L1}
\end{align}
In the above, we have used the fact that $|S(u_\eps)| \le |u_\eps|$ and $|S^\prime(r)| \le 1$ for any $r\in \R$. 
We first estimate $\mathcal{A}_4$. Note that $S^{\prime\prime}(u)=\frac{1}{\ell} {\bf 1}_{\{|u|\le \ell\}}$---which is symmetric around zero and for any $r\in \R$, $r^2 S^{\prime\prime}(r)\le \ell$. 
Hence without loss of generality, we assume that $u_\eps(s,x)\ge 0$.  Then by the assumptions \ref{A5} and \ref{A6}, one has
$\eta(u_\eps;z) \ge \eta(0;z)=0$. Thus, 
$$ 0\le u_\eps(s,x)\le u_\eps(s,x)+\lambda \eta(u_\eps(s,x);z),\quad \forall~\lambda\in [0,1].$$
This gives
\begin{align*}
\mathcal{A}_4 & \le C \mathbb{E}\Big[ \int_0^t \int_{\R^d} \int_{\mathbf{E}}  \int_0^1  (1-\lambda) \big(u_\eps + \lambda\,\eta(u_\eps;z)\big)^2S^{\prime\prime} \big(u_\eps + \lambda\,\eta(u_\eps;z)\big)
 h^2(z) \phi(x)\,d\lambda\,m(dz)\,dx\,ds\Big] \\
 & \le C t \|\phi\|_{L^1(\R^d)} \ell\,.
\end{align*}
Since $r^2 S^{\prime\prime}(r)\le \ell$, one has $$\mathcal{A}_3\le  C t \|\phi\|_{L^1(\R^d)} \ell.$$
Combining these estimates in \eqref{esti:1-L1}, we have
\begin{align}
& \mathbb{E}\Big[ \int_{\R^d} \Big( \frac{1}{\ell} \int_0^{u_\eps(t,x)} {\tt T}_\ell(r)\,dr\Big)\phi(x)\,dx\Big] -  \int_{\R^d} \Big( \frac{1}{\ell} \int_0^{u_0(x)} {\tt T}_\ell(r)\,dr\Big)\phi(x)\,dx\notag  \\
& \le C \mathbb{E}\Big[ \int_0^t \int_{\R^d} \Big\{ \big( |u_\eps(s,x)| + \eps \big) |\Delta \phi(x)|+  |u_\eps(s,x)|  |\nabla \phi(x)|\Big\}\,dx\,ds \Big] + C 
t \|\phi\|_{L^1(\R^d)} \ell\,. \label{esti:2-L1}
\end{align}
Since $S(u_\eps)\goto |u_\eps|$ as $\ell \goto 0$, sending $\ell \goto 0$ in \eqref{esti:2-L1}, we have
\begin{align}
& \mathbb{E}\Big[ \int_{\R^d} | u_\eps(s,x)|\phi(x)\,dx\Big] - \int_{\R^d} | u_0(x)|\phi(x)\,dx \notag \\
&\le  C \mathbb{E}\Big[ \int_0^t \int_{\R^d} \Big\{ \big( |u_\eps(s,x)| + \eps \big) |\Delta \phi(x)|+  |u_\eps(s,x)|  |\nabla \phi(x)|\Big\}\,dx\,ds \Big]\,.  \label{esti:3-L1}
\end{align}
Sending $\phi\goto {\bf 1}_{\R^d}$ in  \eqref{esti:3-L1} together with dominated convergence theorem, we get \eqref{inq:L1}.

\vspace{0.5cm}

\noindent\underline{Proof of \eqref{inq:L1-kirchoff}:} Taking 
$$ S(u)=\int_0^u {\tt T}_{\ell}(r)\,dr=\begin{cases} \frac{|u|^2}{2},\quad \text{if}~~~|u|\le \ell \\
\ell |u| -\frac{\ell^2}{2}, \quad \text{if}~~~|u|> \ell\,,\end{cases} $$
in  \eqref{eq:ito-formula-viscous}, we have, for any non-negative test function $\phi \in \mathcal{D}(\R^d)$,
\begin{align}
& \mathbb{E}\Big[ \int_{\Pi_T} {\bf 1}_{\{|u_\eps|\le \ell\}} \Big( |\nabla {\tt G}(u_\eps)|^2 + \eps |\nabla u_\eps|^2\Big)\,\phi(x)\,dx\,ds\Big] \notag \\
&\le \int_{\R^d} S(u_0(x))\phi(x)\,dx + C \ell\, \mathbb{E}\Big[ \int_{\Pi_T} \big( |u_\eps(s,x)| + \eps\big) \big( |\Delta \phi(x)| + |\nabla \phi(x)|\big)\,dx\,ds\Big] \notag \\
&\qquad  + C \mathbb{E}\Big[ \int_{\Pi_T} |u_\eps(s,x)|^2\phi(x)\,dx\,ds\Big]\,. \notag
\end{align}
Replacing $\phi$ in the above inequality by the standard smooth cut-off function on $B_N(0)$ with support inside $B_{2N}(0)$ and letting $N$ goes to infinity, we arrive at
\begin{align}
& \mathbb{E}\Big[ \int_{\Pi_T} {\bf 1}_{\{|u_\eps(s,x)|\le \ell\}} \Big( |\nabla {\tt G}(u_\eps)|^2 + \eps |\nabla u_\eps|^2\Big)\,dx\,ds\Big] \notag \\
& \le \int_{\R^d} S(u_0(x))\,dx + C T \sup_{0\le t\le T} \mathbb{E}\Big[\|u_\eps(t)\|_{L^2(\R^d)}^2\Big]\,. \notag
\end{align}
Since $|S(r)| \le \ell |r|$ for any $r\in \R$, we get from the above inequality
\begin{align*}
\mathbb{E}\Big[ \int_{\Pi_T}  \Big( |\nabla {\tt G}({\tt T}_{\ell}(u_\eps(s,x)))|^2 + \eps |\nabla  {\tt T}_{\ell}(u_\eps(s,x))|^2\Big)\,dx\,ds\Big] \le C(\ell)\,,
\end{align*}
where $C(\ell):= \ell \|u_0\|_{L^1(\R^d)} + T \underset{0\le t\le T}\sup\, \mathbb{E}\Big[\|u_\eps(t)\|_{L^2(\R^d)}^2\Big]$. This completes the proof.
\end{proof}

Let us state the following technical lemma, whose proof can be found in \cite[Lemma $6$]{Blanchard-2001} under cosmetic change. 
\begin{lem}\label{lem:tech}
Let $F \in L^1(\Omega \times \Pi_T),~~F\ge 0$, and $\bar{u}, \bar{v}: \Omega \times \Pi_T \goto [0,\infty]$ be two measurable functions. Then there exists a sequence $\{{\tt s}_j\}$ of real numbers such that as $j\goto \infty$
\begin{align*}
\begin{cases}
{\tt s}_j \goto + \infty, \\
\underset{\delta \goto 0}\limsup \Bigg\{ \frac{1}{\delta} \mathbb{E}\Big[ \iint_{\{ {\tt s}_j -\delta \le \bar{u}\le {\tt s}_j +\delta \}} F\,dx\,dt\Big] +  \frac{1}{\delta} \mathbb{E}\Big[ \iint_{\{ {\tt s}_j -\delta \le \bar{v}\le {\tt s}_j +\delta \}} F\,dx\,dt\Big] \Bigg\} \goto 0.
\end{cases}
\end{align*}
\end{lem}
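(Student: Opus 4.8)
The plan is to reformulate the statement as a question about the symmetric differentiation of a single finite measure on $[0,\infty)$, and then to exploit integrability over a domain of infinite measure to extract the desired sequence of levels. First I would push the finite measure $F\,d\mathbb{P}\,dx\,dt$ forward under the maps $\bar u$ and $\bar v$: for a Borel set $A\subseteq[0,\infty]$ set
\[
\mu(A)=\mathbb{E}\Big[\iint_{\{\bar u\in A\}}F\,dx\,dt\Big],\qquad \lambda(A)=\mathbb{E}\Big[\iint_{\{\bar v\in A\}}F\,dx\,dt\Big],
\]
and let $\rho=\mu+\lambda$. Since $F\ge 0$ and $F\in L^1(\Omega\times\Pi_T)$, both $\mu$ and $\lambda$, hence $\rho$, are nonnegative finite measures of total mass at most $2\|F\|_{L^1(\Omega\times\Pi_T)}$; any atom of $\rho$ at $+\infty$ is irrelevant to what follows, so I regard $\rho$ as a finite measure on $[0,\infty)$. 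With this notation the bracketed expression in the statement is exactly $\tfrac1\delta\,\rho([{\tt s}-\delta,{\tt s}+\delta])$.

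Second, I would invoke the Besicovitch (Lebesgue) differentiation theorem for Radon measures on the line. Writing the Lebesgue decomposition $\rho=\rho_{\mathrm{ac}}+\rho_{\mathrm{s}}$ with density $\psi:=d\rho_{\mathrm{ac}}/d\mathrm{Leb}\in L^1([0,\infty))$, the theorem furnishes a Borel set $E\subseteq[0,\infty)$ of full Lebesgue measure on which the symmetric derivative exists as a genuine limit and
\[
\lim_{\delta\goto 0}\frac{\rho([s-\delta,s+\delta])}{2\delta}=\psi(s),\qquad s\in E.
\]
In particular, for $s\in E$ the $\limsup$ appearing in the statement is the honest limit $2\psi(s)$, which is finite. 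This is the step that upgrades the $\limsup$ in the claim to a quantity I can control pointwise.

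Third, I would extract the sequence. Because $\psi\ge 0$ is integrable over $[0,\infty)$, which has infinite Lebesgue measure, for every $j\in\mathbb{N}$ the set $\{s>j:\psi(s)<1/j\}$ has positive measure --- otherwise $\psi\ge 1/j$ a.e. on $(j,\infty)$ and $\int_0^\infty\psi=+\infty$, a contradiction. Intersecting with the full-measure set $E$ this set remains nonempty, so I may choose ${\tt s}_j\in E$ with ${\tt s}_j>j$ and $\psi({\tt s}_j)<1/j$. Then ${\tt s}_j\goto+\infty$, and by the second step the quantity in the statement equals $2\psi({\tt s}_j)<2/j\goto 0$, which is precisely the asserted conclusion.

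The main obstacle is the rigorous handling of the symmetric $\limsup$: a crude Fubini--Fatou estimate only bounds $\int_0^\infty\liminf_{\delta\goto 0}\tfrac1{2\delta}\rho([s-\delta,s+\delta])\,ds$ by $\rho([0,\infty))$ and hence controls the $\liminf$, whereas the statement demands the $\limsup$. Passing to the genuine symmetric derivative via the differentiation theorem is what closes this gap; the remaining measure-theoretic bookkeeping (finiteness and well-definedness of the pushforwards, the Lebesgue decomposition, and the harmless atom at infinity) is routine.
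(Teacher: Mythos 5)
Your proof is correct. Note that the paper itself offers no proof of this lemma: it only cites \cite[Lemma 6]{Blanchard-2001} ``under cosmetic change.'' The argument behind that citation is the classical monotone-function one: introduce the bounded nondecreasing distribution functions $g(s)=\mathbb{E}\big[\iint_{\{\bar u\le s\}}F\,dx\,dt\big]$ and $h(s)=\mathbb{E}\big[\iint_{\{\bar v\le s\}}F\,dx\,dt\big]$, invoke Lebesgue's theorem on a.e.\ differentiability of monotone functions to get $g',h'\in L^1(0,\infty)$ with $\int_0^\infty (g'+h')\le 2\|F\|_{L^1}$, observe that at points of differentiability the two symmetric difference quotients in the statement converge to $2g'(s)$ and $2h'(s)$ respectively, and then extract ${\tt s}_j>j$ with $(g'+h')({\tt s}_j)<1/j$, which is possible precisely because an integrable function on a set of infinite measure cannot be bounded below. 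Your proof is this same argument repackaged in measure-theoretic language: your pushforward measure $\rho$ has distribution function $g+h$, and Besicovitch differentiation of $\rho$ with respect to Lebesgue measure is the measure form of Lebesgue differentiation of $g+h$; the final extraction step is identical. What your formulation buys is cleanliness: working directly with $\rho([s-\delta,s+\delta])$ avoids the bookkeeping with left limits and possible jumps of the monotone functions, makes the upgrade of the $\limsup$ to a genuine limit transparent (which, as you rightly flag, is the point a naive Fubini--Fatou estimate misses), and the atom of $\rho$ at $+\infty$ is harmless exactly as you argue, since the windows $[{\tt s}_j-\delta,{\tt s}_j+\delta]$ are bounded.
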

\begin{lem}
For any $\ell, \delta>0$, there exists a bounded function $\mathcal{E}$ on $\R^+$ such that
\begin{align}
& \mathbb{E}\Big[ \frac{1}{\delta} \iint_{\{\ell < |u_\eps|< \ell + \delta\}} \Big( |\nabla {\tt G}(u_\eps(s,x))|^2 + \eps |\nabla  u_\eps(s,x)|^2+ \frac{1}{2}\sigma^2(u_\eps(s,x)) \Big)\,dx\,ds\Big] \le \mathcal{E}(\ell)\,. \label{inq:2-radon-stoch}
\end{align}
Moreover, there exists a subsequence $\{{\ell}_j\}$ with $\ell_j\goto \infty $ as $j\goto \infty$ such that 
\begin{align}
 \lim_{j\goto \infty} \mathcal{E}(\ell_j)=0\,. \label{limit-measure-bound}
 \end{align}
\end{lem}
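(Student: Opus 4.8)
The plan is to test the generalized It\^o--L\'evy formula \eqref{eq:ito-formula-viscous} against an entropy that concentrates on the state-space band $\{\ell<|\cdot|<\ell+\delta\}$. Concretely, I would take $S=S_{\ell,\delta}$ to be the even, convex $W^{2,\infty}(\R)$ function determined by $S(0)=S'(0)=0$ and $S''(r)=\frac1\delta{\bf 1}_{\{\ell<|r|<\ell+\delta\}}$, so that $0\le S'\le 1$ and $0\le S(r)\le(|r|-\ell)^+\le|r|$; this $S$ is admissible in \eqref{eq:ito-formula-viscous} by the quoted approximation argument. With this choice the diffusion--dissipation term $S''(u_\eps)\big(|\grad{\tt G}(u_\eps)|^2+\eps|\grad u_\eps|^2\big)$ together with the Brownian It\^o correction $\tfrac12\sigma^2(u_\eps)S''(u_\eps)$ reproduce \emph{exactly} the integrand on the left of \eqref{inq:2-radon-stoch}; denote that left-hand side by $Q_{\ell,\delta}$, so that $Q_{\ell,\delta}=\mathbb{E}\big[\int_0^T\!\!\int_{\R^d}S''(u_\eps)\big(|\grad{\tt G}(u_\eps)|^2+\eps|\grad u_\eps|^2+\tfrac12\sigma^2(u_\eps)\big)\,dx\,dt\big]$.

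Next I would take $\psi=\phi$ a smooth spatial cut-off, take expectations so that the two martingale terms drop out, and let $\phi\goto{\bf 1}_{\R^d}$ exactly as in the proof of \eqref{inq:L1}: the transport and second-order contributions carrying $\grad\phi,\Delta\phi$ vanish in the limit since $|\zeta^S(u_\eps)|\le L_f|u_\eps|$, $|\nu^S(u_\eps)|\le L_\Phi|u_\eps|$ and $|S(u_\eps)|\le|u_\eps|$ are all controlled by \eqref{inq:L1}. Rearranging the resulting identity I obtain the exact balance
\[
Q_{\ell,\delta}=\int_{\R^d}S(u_0)\,dx-\mathbb{E}\Big[\int_{\R^d}S(u_\eps(T))\,dx\Big]+I_\sigma+I_\eta,
\]
with $I_\sigma=\mathbb{E}\big[\int_0^T\!\!\int_{\R^d}\sigma^2(u_\eps)S''(u_\eps)\,dx\,dt\big]$ and $I_\eta=\mathbb{E}\big[\int_0^T\!\!\int_{\R^d}\int_{\mathbf E}\big(S(u_\eps+\eta)-S(u_\eps)-\eta S'(u_\eps)\big)\,m(dz)\,dx\,dt\big]$. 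This identity is the crux: the band integral of the gradients, which has no naive $\delta$-uniform bound, is \emph{expressed} through the datum tail and the noise corrections instead of being estimated directly. Since $S\ge0$, the terminal term is nonpositive and may be discarded, while $\int_{\R^d}S(u_0)\,dx\le\int_{\{|u_0|>\ell\}}|u_0|\,dx=:\tau(\ell)\goto0$ as $\ell\goto\infty$ because $u_0\in L^1(\R^d)$.

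It then remains to estimate $I_\sigma$ and $I_\eta$ and to set $\mathcal{E}(\ell)$ equal to $\tau(\ell)$ plus these contributions. For $I_\sigma$ I keep $F_\sigma:=\sigma^2(u_\eps)$, which belongs to $L^1(\Omega\times\Pi_T)$ by \ref{A4} and the $L^2$-bound of Proposition \ref{prop:vanishing viscosity-solution}, and write $I_\sigma=\frac1\delta\mathbb{E}\big[\iint_{\{\ell<|u_\eps|<\ell+\delta\}}F_\sigma\big]$. For $I_\eta$ I would first invoke the monotonicity \ref{A6} together with $\eta(0,z)=0$ from \ref{A5}, so that $\eta(u_\eps;z)$ carries the sign of $u_\eps$ and hence $|u_\eps+\lambda\eta|=|u_\eps|+\lambda|\eta|$; writing the jump defect as $\int_0^1(1-\lambda)\eta^2S''(u_\eps+\lambda\eta)\,d\lambda\ge0$ and using $|\eta|\le\eta_b h(z)$ with $h\in L^2(\mathbf E,m)$ from \ref{A7} keeps $\eta^2\le\eta_b^2h^2\in L^1(\mathbf E,m)$ and confines the integrand to the band. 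In this way both $I_\sigma$ and $I_\eta$ are recast as $\frac1\delta$-weighted band averages of fixed nonnegative $L^1$ functions.

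Finally, to produce the subsequence of \eqref{limit-measure-bound} I would apply the technical Lemma \ref{lem:tech} with $\bar u=|u_\eps|$ (and an auxiliary function recording the post-jump state for the $\eta$-term) and $F$ assembled from $F_\sigma$ and $\eta_b^2h^2$: this furnishes heights $\ell_j\goto\infty$ along which $\limsup_{\delta\goto0}$ of the band averages vanishes, whence $\mathcal{E}(\ell_j)=\tau(\ell_j)+o(1)\goto0$. I expect the main obstacle to be the jump term $I_\eta$: unlike the local term $I_\sigma$, its band is smeared over the fixed-size jump amplitude, so the raw $\frac1\delta$-scaling is not controllable as $\delta\goto0$. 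Taming it forces genuine use of the sign-localization from \ref{A6} and the $L^2(m)$-control of the jump size from \ref{A7}, and the passage to a carefully selected sequence $\{\ell_j\}$ of heights rather than an unrestricted limit in $\ell$.
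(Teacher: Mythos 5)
Your construction coincides, step for step, with the paper's own argument until the very end: the same band entropy $S$ with $S''=\frac1\delta{\bf 1}_{\{\ell<|\cdot|<\ell+\delta\}}$ (the paper writes it as $S'=\frac1\delta({\tt T}_{\ell+\delta}-{\tt T}_{\ell})$), the same use of \eqref{eq:ito-formula-viscous} with a spatial cut-off and expectation to kill the martingales, the same bookkeeping (initial tail $\int_{\{|u_0|>\ell\}}|u_0|\,dx$, discarded terminal term, transport terms vanishing as the cut-off tends to ${\bf 1}_{\R^d}$, and the two band-localized noise corrections $I_\sigma$, $I_\eta$), and the same appeal to Lemma \ref{lem:tech} to select the heights $\ell_j$.

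The genuine gap is in the last step: you apply Lemma \ref{lem:tech} with $\bar u=|u_\eps|$ (and its post-jump analogue), i.e.\ to band averages of the \emph{viscous} solution. This makes both the function $\mathcal{E}$ and the selected sequence $\{\ell_j\}$ depend on $\eps$: for each $\eps$ the lemma produces its own heights, and nothing guarantees a common choice. But the way the lemma is used afterwards requires exactly that uniformity: the masses $\mathbb{E}\big[\mu_{\ell,\eps,\delta}^K(\Pi_T)\big]$ must be bounded by a single function $\mathcal{E}(\ell)$ \emph{uniformly in $\eps$ and $\delta$}, so that after the two weak-$*$ passages $\delta\goto 0$ and $\eps\goto 0$ in \eqref{conv:radon-measure} one can still conclude $\mathbb{E}\big[\mu_{\ell_j}^K(\Pi_T)\big]\le \mathcal{E}(\ell_j)\goto 0$ along one fixed sequence, which is \eqref{eq:total-mass}; with an $\eps$-dependent bound and $\eps$-dependent heights, the dissipation condition for the limit measure cannot be extracted. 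The paper's resolution is precisely the move you omit: invoke part i) of Theorem \ref{thm:L1-contraction-entropy} (strong convergence $u_\eps\goto u$) to dominate the band averages of $u_\eps$ by the corresponding band averages of the \emph{fixed} limit entropy solution $u$, setting $\mathcal{E}_2(\ell)=\limsup_{\delta\goto 0}\frac1\delta\mathbb{E}\big[\iint_{\{\ell<|u|<\ell+\delta\}}\sigma^2(u)\,dx\,ds\big]$ and similarly $\mathcal{E}_3(\ell)$ for the jump term, and only then apply Lemma \ref{lem:tech} — to $|u|$ and $|u+\lambda\eta(u;z)|$ — which yields one bounded function $\mathcal{E}=\mathcal{E}_1+\mathcal{E}_2+\mathcal{E}_3$ and one sequence $\{\ell_j\}$ serving all $\eps,\delta$ simultaneously. (Your side remarks are harmless but inessential here: this lemma needs neither \ref{A6} nor \ref{A7} — the post-jump band is handled by the second function $\bar v$ in Lemma \ref{lem:tech}, and integrability of the jump correction already follows from \ref{A5} together with the $L^2$ bound of Proposition \ref{prop:vanishing viscosity-solution}.)
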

\begin{proof}
 For given $\ell, \delta >0$, define $S\in W^{2,\infty}$ by 
\begin{align*}
S(0)=0,\quad S^\prime(r)=\frac{1}{\delta}\big( {\tt T}_{\ell + \delta}(r)-{\tt T}_{\ell}(r)\big)=\begin{cases} -1,\quad r<-\ell-\delta, \\
\frac{r+\ell}{\delta},\quad -\ell-\delta < r < -\ell, \\0, \quad -\ell <r <\ell, \\ \frac{r-\ell}{\delta}, \quad \ell < r < \ell + \delta , \\ 1, \quad r > \ell + \delta\,.
\end{cases}
\end{align*}
Then $|S^\prime(r)| \le 1$. Hence $\zeta^S$ and $\nu^S$ are Lipschitz continuous with same Lipschitz constant of $f$ and $\Phi$ respectively. Observe that 
$$ S^{\prime\prime}(r)=\frac{1}{\delta} {\bf 1}_{\{ \ell < |r| < \ell + \delta\}}.$$
Using above $S$ in \eqref{eq:ito-formula-viscous}, we have, for any non-negative test function $\psi\in \mathcal{D}(\R^d)$,
\begin{align}
& \mathbb{E}\Big[ \frac{1}{\delta} \iint_{\{\ell < |u_\eps|< \ell + \delta\}} \Big(  |\nabla {\tt G}(u_\eps(s,x))|^2 + \eps |\nabla u_\eps(s,x)|^2+ \frac{1}{2}\sigma^2(u_\eps(s,x)) \Big)\psi(x)\,dx\,ds\Big] \notag \\
& \le \int_{\{|u_0|> \ell\}} |u_0(x)|\psi(x)\,dx + \mathbb{E}\Big[ \iint_{\{|u_\eps(s,x)| > \ell\}} \big( | u_\eps(s,x)| + \eps\big) \big( |\Delta \psi(x)| + |\nabla \psi(x)|\big)\,dx\,ds\Big] \notag \\
& \quad + \frac{1}{\delta} \mathbb{E}\Big[ \iint_{ \{\ell < |u_\eps|< \ell + \delta\}} \sigma^2(u_\eps(s,x))\psi(x)\,dx\,ds\Big] \notag \\
&  \qquad +   \mathbb{E}\Big[ \frac{1}{\delta}  \int_{|z|>0}\int_0^1 \iint_{\{\ell < |u_\eps + \lambda \eta(u_\eps;z)|< \ell + \delta\}}  (1-\lambda) \eta^2(u_\eps;z)\psi(x)\,d\lambda\,m(dz)\,dx\,ds\Big]\,. \label{inq:1-radon-stoch}
\end{align}
Invoking  ${\rm i)}$ of Theorem \ref{thm:L1-contraction-entropy},   we obtain,  after sending $\psi\goto {\bf 1}_{\R^d}$ in the resulting inequality,
\begin{align}
& \mathbb{E}\Big[ \frac{1}{\delta} \iint_{\{\ell < |u_\eps|< \ell + \delta\}} \Big( |\nabla {\tt G}(u_\eps(s,x))|^2 + \eps |\nabla  u_\eps(s,x)|^2+ \frac{1}{2}\sigma^2(u_\eps(s,x)) \Big)\,dx\,ds\Big] \notag \\
& \le \int_{\{|u_0|> \ell\}} |u_0(x)|\,dx  + \frac{1}{\delta} \mathbb{E}\Big[ \iint_{ \{\ell < |u|< \ell + \delta\}} \sigma^2(u(s,x))\,dx\,ds\Big] \notag \\
& +   \mathbb{E}\Big[ \frac{1}{\delta}  \int_{|z|>0}\int_0^1 \iint_{\{\ell < |u + \lambda \eta(u;z)|< \ell + \delta\}}  (1-\lambda) \eta^2(u;z)\,d\lambda\,m(dz)\,dx\,ds\Big] \notag \\
& \le   \int_{\{|u_0|> \ell\}} |u_0(x)|\,dx +  \limsup_{\delta \goto 0} \frac{1}{\delta} \mathbb{E}\Big[ \iint_{ \{\ell < |u|< \ell + \delta\}} \sigma^2(u(s,x))\,dx\,ds\Big] \notag \\ 
& +   \limsup_{\delta \goto 0} \frac{1}{\delta} \mathbb{E}\Big[\int_{|z|>0}\int_0^1 \iint_{\{\ell < |u + \lambda \eta(u;z)|< \ell + \delta\}}  (1-\lambda) \eta^2(u;z)\,d\lambda\,m(dz)\,dx\,ds\Big] \notag \\
& \equiv \mathcal{E}_1(\ell) +  \mathcal{E}_2(\ell) +  \mathcal{E}_3(\ell)=:\mathcal{E}(\ell)\,. \notag
\end{align}
Hence the assertion \eqref{inq:2-radon-stoch} follows if we take $\mathcal{E}(\ell):=\mathcal{E}_1(\ell) +  \mathcal{E}_2(\ell) +  \mathcal{E}_3(\ell)$. 
Since $u_0\in L^1(\R^d)\cap L^\infty(\R^d)$, $\mathcal{E}_1(\ell)\goto 0$ as $\ell \goto \infty$. Again, in view of Lemma \ref{lem:tech}, there exists a sequence $\{\ell_j\}$ of real numbers such that $\ell_j \goto \infty$ as $j\goto \infty$, and 
\begin{align*}
 \lim_{j\goto \infty} \mathcal{E}_2(\ell_j)=0,\quad  \lim_{j\goto \infty} \mathcal{E}_3(\ell_j)=0. 
 \end{align*}
 In other words, \eqref{limit-measure-bound} holds true. This completes the proof. 
\end{proof}

\subsubsection{\bf Existence proof: renormalized solution for $u_0\in L^1(\R^d)\cap L^\infty(\R^d)$}
Let $(\beta, \zeta, \nu) $ be a given convex $C^2$ entropy flux triplet with $\beta^\prime(0)=0$ and $\beta^\prime$ bounded by some constant $K>0$. 
For any $\ell, \delta>0$, define ${\tt h}_{\ell, \delta}:\R \goto \R$ by
\begin{align*}
{\tt h}_{\ell, \delta}(0)=0,\quad {\tt h}_{\ell, \delta}^\prime(r)=\begin{cases} 1,\quad |r|\le \ell, \\
\frac{\ell + \delta -|r|}{\delta},\quad \ell < |r| < \ell + \delta, \\0, \quad |r| >\ell + \delta \,.
\end{cases}
\end{align*}
Note that  ${\rm supp}\, {\tt h}_{\ell, \delta}^\prime(\cdot)\subset [-\ell -\delta, \ell + \delta]$. Moreover, ${\tt h}^{\prime\prime}_{\ell, \delta}(r)=\frac{1}{\delta}{\bf 1}_{\{\ell < |r| < \ell +\delta\}}$ and
\begin{align}
{\tt h}_{\ell, \delta}(r)\goto {\tt T}_{\ell}(r),\quad {\tt h}_{\ell, \delta}^\prime(r) \goto {\bf 1}_{\{|r| < \ell\}} \quad \text{as}~~\delta\goto 0. \label{conv:special-h}
\end{align}
Let $S=S_{\beta, {\tt h}_{\ell, \delta}}$ of the form 
\begin{align}
S_{\beta, {\tt h}_{\ell, \delta}}(0)=0,~~~S_{\beta, {\tt h}_{\ell, \delta}}^\prime(r)=\beta^\prime(r) {\tt h}_{\ell, \delta}^\prime(r)\,. \label{eq:special-fun-S}
\end{align}
Then, one has, for any smooth enough function $F: \R \rightarrow \R$
\begin{equation}\label{conv:special-S-h}
\begin{aligned}
&S_{\beta, {\tt h}_{\ell, \delta}}(r)\goto \beta({\tt T}_{\ell}(r)),~~~S_{\beta, {\tt h}_{\ell, \delta}}^\prime(r)\goto \beta^\prime({\tt T}_{\ell}(r))\,,~~~F(r)S_{\beta, {\tt h}_{\ell, \delta}}^\prime(r)\goto F({\tt T}_{\ell}(r))\beta^\prime({\tt T}_{\ell}(r))\,,\\
 &~~~\zeta^{S_{\beta, {\tt h}_{\ell, \delta}}}(r)\goto \zeta({\tt T}_{\ell}(r)),~~~ \text{and} ~~~\nu^{S_{\beta, {\tt h}_{\ell, \delta}}}(r)\goto \nu({\tt T}_{\ell}(r))\,, 
\end{aligned}
\end{equation}
as $\delta \goto 0$; see \cite{Karlsen-2004, Ibrango-2021}.
Using this $S=S_{\beta, {\tt h}_{\ell, \delta}}$ in  \eqref{eq:ito-formula-viscous}, we get, for any $0\le \psi\in \mathcal{D}([0,\infty)\times \R^d)$ and $B\in \mathcal{F}_T$,
 \begin{align}
 & \mathbb{E}\Big[ {\bf 1}_B \int_{\Pi_T} \Big\{ S_{\beta, {\tt h}_{\ell, \delta}}(u_\eps) \partial_t\psi +  \nu^{S_{\beta, {\tt h}_{\ell, \delta}}}(u_\eps)\Delta \psi -  \grad \psi\cdot \zeta^{S_{\beta, {\tt h}_{\ell, \delta}}}(u_\eps) + \eps \Delta S_{\beta, {\tt h}_{\ell, \delta}}(u_\eps) \psi \Big\}dx\,dt\Big] \notag \\
 & +\mathbb{E}\Big[ {\bf 1}_B \int_{\Pi_T} \sigma(u_\eps)S^\prime_{\beta, {\tt h}_{\ell, \delta}} (u_\eps)\psi(t,x)\,dW(t)\,dx\Big]
 + \frac{1}{2}\mathbb{E}\Big[ {\bf 1}_B\int_{\Pi_T}\sigma^2(u_\eps)\beta^{\prime\prime}(u_\eps) {\tt h}^\prime_{\ell, \delta}(u_\eps)\psi(t,x)\,dx\,dt\Big] \notag \\
 & +  \frac{1}{2\delta} \mathbb{E}\Big[ {\bf 1}_B \int_{\Pi_T}\sigma^2(u_\eps) \beta^\prime(u_\eps) {\bf 1}_{\{\ell < |u_\eps|< \ell + \delta\}}\psi(t,x)\,dx\,dt\Big] \notag \\
  &  +\mathbb{E}\Big[ {\bf 1}_B \int_{\Pi_T} \int_{\mathbf{E}} \big\{ S_{\beta, {\tt h}_{\ell, \delta}} \big(u_\eps+ \eta(u_\eps;z)\big) -  S_{\beta, {\tt h}_{\ell, \delta}} (u_\eps)\big\}\psi(t,x)\,\widetilde{N}(dz,dt)\,dx\Big]  \notag \\
 & + \mathbb{E}\Big[ {\bf 1}_B\int_{\Pi_T} \int_{\mathbf{E}}\big\{ S_{\beta, {\tt h}_{\ell, \delta}} \big(u_\eps+ \eta(u_\eps;z)\big) -  S_{\beta, {\tt h}_{\ell, \delta}} (u_\eps)-\eta(u_\eps;z)S^{\prime}_{\beta, {\tt h}_{\ell, \delta}}(u_\eps)\big\}
 \psi(t,x)\,m(dz)\,dx\,dt\Big] \notag \\
 & = \mathbb{E}\Big[ {\bf 1}_B \int_{\Pi_T} \beta^{\prime\prime}(u_\eps) {\tt h}^\prime_{\ell, \delta}(u_\eps) \big( |\grad {\tt G}(u_\eps)|^2 + \eps |\nabla u_\eps|^2 \big)\psi(t,x)\,dx\,dt\Big] \notag \\
 & + \frac{1}{\delta}\mathbb{E}\Big[ {\bf 1}_B \int_{\Pi_T} \beta^{\prime}(u_\eps){\bf 1}_{\{\ell < |u_\eps|< \ell + \delta\}}\big( |\grad {\tt G}(u_\eps)|^2 + \eps |\nabla u_\eps|^2 \big)\psi(t,x)\,dx\,dt  \Big] \notag \\
 & +\mathbb{E}\Big[ {\bf 1}_B \int_{\R^d} S_{\beta, {\tt h}_{\ell, \delta}}(u_\eps(T,x))\psi(T,x)\,dx\Big] -\mathbb{E}\Big[ {\bf 1}_B \int_{\R^d} S_{\beta, {\tt h}_{\ell, \delta}}(u_0)\psi(0,x)\,dx\Big]~\text{i.e.,}~\sum_{i=1}^6 \mathcal{H}_i=\sum_{i=7}^{10} \mathcal{H}_i\,. \label{eq:1-existence-renormalized}
\end{align}
In view of \eqref{conv:special-h} and convexity of $\beta$, we see that
\begin{align}
 \lim_{\delta\goto 0} \mathcal{H}_7
&=   \mathbb{E}\Big[ {\bf 1}_B \int_{\Pi_T} \beta^{\prime\prime}({\tt T}_{\ell}(u_\eps)) \big( |\grad {\tt G}({\tt T}_{\ell}(u_\eps))|^2 + \eps |\nabla {\tt T}_{\ell}(u_\eps)|^2 \big)\psi(t,x)\,dx\,dt\Big] \notag \\
& \ge  \mathbb{E}\Big[ {\bf 1}_B \int_{\Pi_T} \beta^{\prime\prime}({\tt T}_{\ell}(u_\eps))  |\grad {\tt G}({\tt T}_{\ell}(u_\eps))|^2 \psi(t,x)\,dx\,dt\Big]\,. \label{conv:delta-1}
\end{align}
Passing to the limit in the martingale terms in  \eqref{eq:1-existence-renormalized}~(i.e., $\mathcal{H}_2$ and $\mathcal{H}_5$) require similar reasoning as done in \cite[Subsection $4.3$]{BisMajKarl_2014}). In this regard, let $\Upsilon = \Omega \times [0,T] \times \mathbf{E},~\mathcal{M} = \mathbb{P}_T \times \mathcal{E}$ and $\pi = \mathbb{P}~ \otimes~\lambda_t~\otimes~m(dz)$, where $\mathbb{P}_T$ is the predictable $\sigma$-field on $\Omega \times [0,T]$ with respect to $\{\mathcal{F}_t\}$, and $\lambda_t$ is the Lebesgue measure on $[0,T]$. The space $L^2((\Upsilon, \mathcal{M}, \pi);\R)$ represents the square-integrable predictable integrands for It\^o-L\'evy integrals with respect to $\widetilde{N}(dz,dt)$. Thanks to the It\^o-L\'evy isometry and the martingale representation theorem, the It\^o-L\'evy integral defines an isometry between the Hilbert spaces $L^2((\Upsilon, \mathcal{M}, \pi);\R)$ and $L^2((\Omega,\mathcal{F}_T);\R)$. Indeed, if $\mathcal{J}$ denotes the It\^o-L\'evy integral operator and $\{X_n\}_n$ be a sequence converging weakly to $X$ in $L^2((\Upsilon, \mathcal{M}, \pi);\R)$, then $\mathcal{J}(X_n)$ weakly converges to $\mathcal{J}(X)$ in $L^2((\Omega,\mathcal{F}_T);\R)$. Invoking the argument of \cite[Lemma 4.5]{BisMajKarl_2014} along with \eqref{conv:special-S-h},  one has, for any $g(t,z) \in  L^2((\Upsilon, \mathcal{M}, \pi);\R)$, 
\begin{align}
    \underset{\delta \rightarrow 0}{\lim}~&\mathbb{E}\Big[\int_{\Pi_T} \int_{\mathbf{E}} \big\{S_{\beta, {\tt h}_{\ell, \delta}} \big(u_\eps+ \,\eta(u_\eps;z)\big) - S_{\beta, {\tt h}_{\ell, \delta}}(u_\eps)\big\}\psi(t,x)g(t,z)\,m(dz)\,dt\,dx\Big]\notag \\ &  = \mathbb{E}\Big[\int_{\Pi_T} \int_{\mathbf{E}}\big\{\beta\big(T_\ell(u_\eps + \eta(u_\eps;z))\big)-\beta\big(T_\ell(u_\eps)\big)\big\}\psi(t,x)g(t,z)\,m(dz)\,dt\,dx\Big].\notag
\end{align}
Thus, in view of the above discussion, we have
\begin{align}\label{eq:pass-delta-ito-levy-itegral}
    \underset{\delta \rightarrow 0}{\lim}~\mathcal{H}_5 &= \mathbb{E}\Big[{\bf 1}_B\int_{\Pi_T} \int_{\mathbf{E}} \big\{\beta\big({\tt T}_{\ell}(u_\eps+\eta(u_\eps;z))\big)-\beta({\tt T}_{\ell}(u_\eps))\big\}\psi(t,x)\,\widetilde{N}(dz,dt)\,dx\Big]\,\notag \\
    & = \mathbb{E}\Big[{\bf 1}_B\int_{\Pi_T} \int_{\mathbf{E}} \big\{\beta\big({\tt T}_{\ell}(u_\eps)+\eta(T_\ell(u_\eps);z)\big)-\beta({\tt T}_{\ell}(u_\eps))\big\}\psi(t,x)\,\widetilde{N}(dz,dt)\,dx\Big] \notag \\ &  \quad + \mathbb{E}\Big[{\bf 1}_B\int_{\Pi_T} \int_{\mathbf{E}} \big\{\beta\big({\tt T}_{\ell}(u_\eps+\eta(u_\eps;z))\big)-\beta\big({\tt T}_{\ell}(u_\eps)+\eta(T_\ell(u_\eps);z)\big)\big\}\psi(t,x)\,\widetilde{N}(dz,dt)\,dx\Big] \notag \\
    & \equiv \sum_{i=1}^{2}\mathcal{H}_{5,i}\,. 
\end{align}
Moreover, following  \cite[Lemma 4.7]{BisMaj} together with \eqref{conv:special-S-h}, we have 
\begin{align}\label{eq:pass-delta-ito-itegral}
   \underset{\delta \rightarrow 0}{\lim}~\mathcal{H}_2=
  \mathbb{E}\Big[ {\bf 1}_B \int_{\Pi_T} \sigma({\tt T}_{\ell}(u_\eps))\beta^\prime({\tt T}_{\ell}(u_\eps))\psi(t,x)\,dW(t)\,dx\Big].
\end{align}
Next we focus on the term $\mathcal{H}_6$.  Thanks to \eqref{conv:special-S-h}, one has, for any fixed $z \in \mathbf{E}$, 
\begin{align}
&\underset{\delta \rightarrow 0}{\lim}~~\mathbb{E}\Big[{\bf 1}_B\int_{\Pi_T}\big\{S_{\beta, {\tt h}_{\ell, \delta}}\big(u_\eps +\eta(u_\eps;z)\big) - S_{\beta, {\tt h}_{\ell, \delta}}(u_\eps) - \eta(u_\eps) S_{\beta, {\tt h}_{\ell, \delta}}^\beta(u_\eps)\big\}
\psi(t,x)\,dx\,dt\Big]\notag \\
 & = \mathbb{E}\Big[{\bf 1}_B\int_{\Pi_T}\big\{\beta\big(T_\ell(u_\eps +\eta(u_\eps;z))\big) - \beta(T_\ell(u_\eps)) - \eta(T_\ell(u_\eps);z)\beta^\prime(T_\ell(u_\eps))\big\}
 \psi(t,x)\,dx\,dt\Big] \notag\,, 
\end{align}
and hence using the assumption \ref{A5} and  the dominated convergence theorem, we infer that 
\begin{align}\label{eq:jump-error}
\underset{\delta \rightarrow 0}{\lim}~\mathcal{H}_6
 & = \mathbb{E}\Big[{\bf 1}_B\int_{\Pi_T} \int_{\bf E} \big\{\beta\big(T_\ell(u_\eps +\eta(u_\eps;z))\big) - \beta(T_\ell(u_\eps)) - \eta(T_\ell(u_\eps);z)\beta^\prime(T_\ell(u_\eps))\big\}
 \psi(t,x)\,m(dz)\,dx\,dt\Big]\notag \\
 & = \mathbb{E}\Big[{\bf 1}_B\int_{\Pi_T} \int_{\bf E} \big\{\beta\big(T_\ell(u_\eps) +\eta(T_\ell(u_\eps);z)\big) - \beta(T_\ell(u_\eps)) - \eta(T_\ell(u_\eps);z)\beta^\prime(T_\ell(u_\eps))\big\} \notag \\
 & \hspace{3cm} \times \psi(t,x)\,m(dz)\,dx\,dt\Big]\notag \\
 & \quad + \mathbb{E}\Big[{\bf 1}_B\int_{\Pi_T} \int_{\bf E} \big\{\beta\big(T_\ell(u_\eps +\eta(u_\eps;z))\big) - \beta\big(T_\ell(u_\eps) +\eta(T_\ell(u_\eps);z)\big)\big\}
 \psi(t,x)\,m(dz)\,dx\,dt\Big] \notag \\
 & \equiv \sum_{i=1}^2\mathcal{H}_{6,i}\,.
\end{align}

We combine $\mathcal{H}_{5,2}$ and $\mathcal{H}_{6,2}$ and use the fact that $\widetilde{N}(dz,dt)=N(dz,dt)-m(dz)dt$ to have  
\begin{align}
   & \mathcal{H}_{5,2} + \mathcal{H}_{6,2}\notag \\
   &= \mathbb{E}\Big[{\bf 1}_B\int_{\Pi_T} \int_{\mathbf{E}} \big\{\beta\big({\tt T}_{\ell}(u_\eps+\eta(u_\eps;z))\big)-\beta\big({\tt T}_{\ell}(u_\eps)+\eta(T_\ell(u_\eps);z)\big)\big\}\psi(t,x)\,N(dz,dt)\,dx\Big]\,.\label{eq:i2j2}
\end{align}
We claim that 
\begin{align}
\mathcal{H}_{5,2} + \mathcal{H}_{6,2} \le 0\,. \label{inq:i2j2}
\end{align}
We prove the inequality \eqref{inq:i2j2} in case-by-case manner. 
\vspace{0.1cm}

\underline{\textbf{Case I}~: $u_\eps > \ell$.} Since $\eta$ is non-decreasing in its first argument and $\eta(0;z) = 0$ for all $z \in \mathbf{E}$, we have 
\begin{align*}
\eta(u_\eps;z)\ge \eta(\ell,z)\ge \eta(0;z)=0,~~\text{and hence}~u_\eps + \eta(u_\eps;z) > \ell\,.
\end{align*}
Thus, using the definition of ${\tt T}_{\ell}$, we have, from \eqref{eq:i2j2}
\begin{align}
    \mathcal{H}_{5,2} + \mathcal{H}_{6,2}& = \mathbb{E}\Big[{\bf 1}_B\int_{\Pi_T} \int_{\mathbf{E}} \big\{\beta\big(\ell \big)-\beta\big(\ell +\eta(\ell;z)\big)\big\}\psi(t,x)\,N(dz,dt)\,dx\Big]\notag \\ &= - \mathbb{E}\Big[{\bf 1}_B\int_{\Pi_T} \int_{\mathbf{E}}\eta(\ell;z)\beta^\prime({\pmb r})\psi(t,x)\,N(dz,dt)\,dx\Big], \notag 
\end{align}
for some ${\pmb r} \in (\ell, \ell + \eta(\ell; z))$. Thanks to the fact that $\beta^\prime(0) =0$ and $\beta^{\prime\prime} \ge 0$, one has $\beta^\prime({\pmb r}) \ge 0$ as ${\pmb r} \in (\ell, \ell + \eta(\ell; z))$. Since the Poisson random measure $N$ is a counting measure (i.e., a positive measure), and $\psi$ is a non-negative test function, we arrive at \eqref{inq:i2j2} for $u_\eps > \ell.$
\vspace{0.1cm}

\underline{\textbf{Case II}~: $u_\eps < -\ell$.} A similar argument as invoked in {\bf Case I} yields the assertion 
\eqref{inq:i2j2} for $u_\eps <- \ell.$
\vspace{0.2cm}

\underline{\textbf{Case III}~: $0\le u_\eps \le \ell$.} It is easy to see that $ \mathcal{H}_{5,2} + \mathcal{H}_{6,2}=0$ for $u_\eps = 0$. For $0 < u_\eps \le \ell$, we re-write \eqref{eq:i2j2} as follows.
\begin{align*}
     \mathcal{H}_{5,2} + \mathcal{H}_{6,2} = \begin{cases}\displaystyle 
    \mathbb{E}\Big[{\bf 1}_B\int_{\Pi_T} \int_{\mathbf{E}} \big\{\beta\big(\ell\big)-\beta\big(u_\eps+\eta(u_\eps;z)\big)\big\}\psi(t,x)\,N(dz,dt)\,dx\Big] \quad \text{if} ~u_\eps + \eta(u_\eps;z) > \ell\,,\\
0 \qquad \text{if} ~ u_\eps + \eta(u_\eps;z) \le \ell\,, 
\end{cases}  \\
=  \begin{cases}\displaystyle \mathbb{E}\Big[{\bf 1}_B\int_{\Pi_T} \int_{\mathbf{E}} \big(\ell -u_\eps-\eta(u_\eps;z)\big)\beta^\prime({\pmb q})\psi(t,x)\,N(dz,dt)\,dx\Big] \quad \text{if} ~ u_\eps + \eta(u_\eps;z) > \ell\,,\\
0 \qquad \text{if} ~ u_\eps + \eta(u_\eps;z) \le \ell\,, 
\end{cases}
\end{align*}
 for some ${\pmb q} \in (\ell, u_\eps + \eta(u_\eps;z))$. Hence the assertion \eqref{inq:i2j2} follows once we use the fact that $\beta^\prime({\pmb q})\ge 0$ in the above expression. 
\vspace{0.2cm}

\underline{\textbf{Case IV}~: $-\ell \le u_\eps <0$.} In this case, we have
\begin{align*}
     \mathcal{H}_{5,2} + \mathcal{H}_{6,2} = \begin{cases}\displaystyle 
    \mathbb{E}\Big[{\bf 1}_B\int_{\Pi_T} \int_{\mathbf{E}} \big\{\beta\big(-\ell\big)-\beta\big(u_\eps+\eta(u_\eps;z)\big)\big\}\psi(t,x)\,N(dz,dt)\,dx\Big] \quad \text{if}~ u_\eps + \eta(u_\eps;z) <- \ell\,,\\
0 \qquad \text{if} \quad u_\eps + \eta(u_\eps;z) \ge -\ell\,, 
\end{cases}  \\
=  \begin{cases}\displaystyle \mathbb{E}\Big[{\bf 1}_B\int_{\Pi_T} \int_{\mathbf{E}} \big(-\ell -u_\eps-\eta(u_\eps;z)\big)\beta^\prime({\pmb s})\psi(t,x)\,N(dz,dt)\,dx\Big] \quad \text{if}~ u_\eps + \eta(u_\eps;z) <- \ell\,,\\
0 \qquad \text{if}~ u_\eps + \eta(u_\eps;z) \ge -\ell\,, 
\end{cases}
\end{align*}
 for some ${\pmb s} \in (u_\eps + \eta(u_\eps;z), -\ell)$. Since ${\pmb s}\le 0$ and $\beta^{\prime}(\cdot)$ is non decreasing with $\beta^\prime(0)=0$, the required inequality holds true in this case also. 

Again, using \eqref{conv:special-S-h}, we pass to the limit in $\mathcal{H}_3$ as $\delta\goto 0$ to have
\begin{align}\label{eq:ito-error}
   \underset{\delta \rightarrow 0}{\lim}~\mathcal{H}_3 = \mathbb{E}\Big[ {\bf 1}_B\int_{\Pi_T}\sigma^2(T_\ell(u_\eps))\beta^{\prime\prime}(T_\ell(u_\eps))\psi(t,x)\,dx\,dt\Big].  
\end{align}
Define the random Radon measure on $\Pi_T$: $\mathbb{P}$-a.s.,
\begin{align*}
d\mu_{\ell, \eps, \delta}^K(t,x):&=\frac{K}{\delta} {\bf 1}_{\{\ell < |u_\eps(t,x)|<\ell + \delta\}} \Big( |\grad {\tt G}(u_\eps)|^2 + \eps |\nabla u_\eps|^2 + \frac{1}{2} \sigma^2(u_\eps)\Big)\,dx\,dt\,. \notag 
\end{align*}
Thanks to \eqref{inq:2-radon-stoch}, we see that
\begin{align*}
\mathbb{E}\Big[ \mu_{\ell, \eps, \delta}^K(\Pi_T)\Big] \le \mathcal{E}(\ell)\,.
\end{align*}
As a consequence, we may assume that there exist nonnegative bounded Radon measures $\mu_{\ell, \eps}^K$ and $\mu_{\ell}^K$ on $\Pi_T$ such that, $\mathbb{P}$-a.s., 
\begin{align} \label{conv:radon-measure}
\begin{cases}
 \mu_{\ell, \eps, \delta}^K \overset{\ast}{\rightharpoonup} \mu_{\ell, \eps}^K~~\text{ in the sense of measure on $\Pi_T$ as $\delta \goto 0$}, \\
   \mu_{\ell, \eps}^K \overset{\ast}{\rightharpoonup} \mu_{\ell}^K~~\text{  in the sense of measure on $ \Pi_T$ as $\eps \goto 0$}\,.
   \end{cases}
    \end{align}
Moreover, in view of \eqref{limit-measure-bound}, there exists a subsequence $\{{\ell}_j\}$ with $\ell_j\goto \infty $ as $j\goto \infty$ such that 
\begin{align}
  \lim_{j\goto \infty} \mathbb{E}\Big[\mu_{\ell_j}^K(\Pi_T)\Big]=0\,. \label{eq:total-mass}
  \end{align}
 By using \eqref{conv:special-h} and \eqref{conv:special-S-h}, the inequality \eqref{conv:delta-1}, \eqref{eq:pass-delta-ito-levy-itegral}, \eqref{eq:pass-delta-ito-itegral},\eqref{eq:jump-error}, \eqref{inq:i2j2}, \eqref{eq:ito-error} and the convergence of Radon measure \eqref{conv:radon-measure}, we  pass to the limit as $\delta \goto 0$ in
  \eqref{eq:1-existence-renormalized} to have
  \begin{align}
 & \mathbb{E}\Big[ {\bf 1}_B \int_{\Pi_T} \beta^{\prime\prime}({\tt T}_{\ell}(u_\eps))  |\grad {\tt G}({\tt T}_{\ell}(u_\eps))|^2 \psi(t,x)\,dx\,dt\Big] \notag \\
 & \le \mathbb{E}\Big[  {\bf 1}_B \int_{\R^d} \beta({\tt T}_{\ell}(u_0))\psi(0,x)\,dx\Big] - \mathbb{E}\Big[ {\bf 1}_B \int_{\R^d} \beta({\tt T}_{\ell}(u_\eps(T,x))\psi(T,x)\,dx\Big] \notag \\
 & +   \mathbb{E}\Big[ {\bf 1}_B \int_{\Pi_T} \Big\{ \beta({\tt T}_{\ell}(u_\eps)) \partial_t\psi +  \nu({\tt T}_{\ell}(u_\eps))\Delta \psi -  \grad \psi\cdot \zeta ({\tt T}_{\ell}(u_\eps)) + \eps \beta({\tt T}_\ell(u_\eps))\Delta \psi \Big\}dx\,dt\Big] \notag \\
 & +\mathbb{E}\Big[ {\bf 1}_B \int_{\Pi_T} \sigma({\tt T}_{\ell}(u_\eps))\beta^\prime({\tt T}_{\ell}(u_\eps))\psi(t,x)\,dW(t)\,dx\Big]
 + \frac{1}{2}\mathbb{E}\Big[ {\bf 1}_B\int_{\Pi_T}\sigma^2({\tt T}_{\ell}(u_\eps))\beta^{\prime\prime}({\tt T}_{\ell}(u_\eps))\psi(t,x)\,dx\,dt\Big] \notag \\
  &  +\mathbb{E}\Big[ {\bf 1}_B \int_{\Pi_T} \int_{\mathbf{E}} \int_0^1 \eta({\tt T}_{\ell}(u_\eps);z)\beta^\prime \big( {\tt T}_{\ell}(u_\eps)+ \lambda\,\eta({\tt T}_{\ell}(u_\eps);z)\big)\psi(t,x)\,d\lambda\,\widetilde{N}(dz,dt)\,dx\Big]  \notag \\
 & + \mathbb{E}\Big[ {\bf 1}_B\int_{\Pi_T} \int_{\mathbf{E}}  \int_0^1  (1-\lambda)\eta^2({\tt T}_{\ell}(u_\eps);z)\beta^{\prime\prime} \big( {\tt T}_{\ell}(u_\eps)+ \lambda\,\eta({\tt T}_{\ell}(u_\eps);z)\big)
 \psi(t,x)\,d\lambda\,m(dz)\,dx\,dt\Big] \notag \\
 & \quad + \mathbb{E}\Big[ {\bf 1}_{B} \int_{\Pi_T} \psi(t,x)\,d\mu_{\ell, \eps}^K(t,x)\Big]\,. \label{inq:1-existence-renormalized}
  \end{align}
  Note that $u_\eps$ converges to $u$
in $L^p(\Omega\times (0,T); L^p_{{\rm loc}}(\R^d))$ for $1\le p<2$. In particular, $u_\eps \goto u$ $\mathbb{P}$-a.s., and for a.e. $(t,x)\in \Pi_T$. One can easily pass to the limit in \eqref{inq:1-existence-renormalized} (same argument as in \cite{BisMajKarl_2014}) as $\eps\goto 0$ except the first term. A similar argument as in \cite[Page $838$]{BisMajVal-2019} reveals that 
\begin{align}
& \liminf_{\eps\goto 0}  \mathbb{E}\Big[ {\bf 1}_B \int_{\Pi_T} \beta^{\prime\prime}({\tt T}_{\ell}(u_\eps))  |\grad {\tt G}({\tt T}_{\ell}(u_\eps))|^2 \psi(t,x)\,dx\,dt\Big]  \notag \\
& \ge \mathbb{E}\Big[ {\bf 1}_B \int_{\Pi_T} \beta^{\prime\prime}({\tt T}_{\ell}(u))  |\grad {\tt G}({\tt T}_{\ell}(u))|^2 \psi(t,x)\,dx\,dt\Big] \,. \label{inq:final-kirchoff-renormalized}
\end{align}
By using \eqref{conv:radon-measure} and \eqref{inq:final-kirchoff-renormalized} along with non-negativity of $\beta$, we obtain, after passing to the limit $\eps \goto 0$ in \eqref{inq:1-existence-renormalized},
\begin{align}
&  \mathbb{E}\Big[ {\bf 1}_B \int_{\Pi_T} \beta^{\prime\prime}({\tt T}_{\ell}(u))  |\grad {\tt G}({\tt T}_{\ell}(u))|^2 \psi(t,x)\,dx\,dt\Big] \notag \\
 & \le  \mathbb{E}\Big[  {\bf 1}_B \int_{\R^d} \beta({\tt T}_{\ell}(u_0))\psi(0,x)\,dx\Big]  + \mathbb{E}\Big[ {\bf 1}_{B} \int_{\Pi_T} \psi(t,x)\,d\mu_{\ell}^K(t,x)\Big] \notag \\
 & +   \mathbb{E}\Big[ {\bf 1}_B \int_{\Pi_T} \Big\{ \beta({\tt T}_{\ell}(u)) \partial_t\psi +  \nu({\tt T}_{\ell}(u))\Delta \psi -  \grad \psi\cdot \zeta ({\tt T}_{\ell}(u)) \Big\}dx\,dt\Big] \notag \\
 & +\mathbb{E}\Big[ {\bf 1}_B \int_{\Pi_T} \sigma({\tt T}_{\ell}(u))\beta^\prime({\tt T}_{\ell}(u))\psi(t,x)\,dW(t)\,dx\Big]
 + \frac{1}{2}\mathbb{E}\Big[ {\bf 1}_B\int_{\Pi_T}\sigma^2({\tt T}_{\ell}(u))\beta^{\prime\prime}({\tt T}_{\ell}(u))\psi(t,x)\,dx\,dt\Big] \notag \\
  &  +\mathbb{E}\Big[ {\bf 1}_B \int_{\Pi_T} \int_{\mathbf{E}} \int_0^1 \eta({\tt T}_{\ell}(u);z)\beta^\prime \big( {\tt T}_{\ell}(u)+ \lambda\,\eta({\tt T}_{\ell}(u);z)\big)\psi(t,x)\,d\lambda\,\widetilde{N}(dz,dt)\,dx\Big]  \notag \\
 & + \mathbb{E}\Big[ {\bf 1}_B\int_{\Pi_T} \int_{\mathbf{E}}  \int_0^1  (1-\lambda)\eta^2({\tt T}_{\ell}(u);z)\beta^{\prime\prime} \big( {\tt T}_{\ell}(u)+ \lambda\,\eta({\tt T}_{\ell}(u);z)\big)
 \psi(t,x)\,d\lambda\,m(dz)\,dx\,dt\Big] \,. \label{inq:2-existence-renormalized}
\end{align}
Again, invoking ${\rm i)}$ of Theorem \ref{thm:L1-contraction-entropy} and  the uniform moment estimates \eqref{inq:L1} and \eqref{inq:L1-kirchoff} along with the generalized Fatou's lemma, we have, for each $ T>0$ and $\ell >0$
 \begin{align*}
 {\tt G}({\tt T}_{\ell}(u)) \in L^2((0,T)\times \Omega;H^1(\R^d)), \,\, \text{and} \,\, 
 \underset{0\leq t\leq T}\sup  \mathbb{E}\big[||u(t,\cdot)||_{1}\big] <+ \infty. 
 \end{align*}
Since \eqref{inq:2-existence-renormalized} holds for any $B\in \mathcal{F}_T$, the inequality
\begin{align*}
&  \int_{\Pi_T} \beta^{\prime\prime}({\tt T}_{\ell}(u))  |\grad {\tt G}({\tt T}_{\ell}(u))|^2 \psi(t,x)\,dx\,dt -  \int_{\Pi_T} \psi(t,x)\,d\mu_{\ell}^K(t,x)\notag \\
 & \le \int_{\R^d} \beta({\tt T}_{\ell}(u_0))\psi(0,x)\,dx  +   \int_{\Pi_T} \Big\{ \beta({\tt T}_{\ell}(u)) \partial_t\psi +  \nu({\tt T}_{\ell}(u))\Delta \psi -  \grad \psi\cdot \zeta ({\tt T}_{\ell}(u)) \Big\}dx\,dt \notag \\
 & + \int_{\Pi_T} \sigma({\tt T}_{\ell}(u_\eps))\beta^\prime({\tt T}_{\ell}(u_\eps))\psi(t,x)\,dW(t)\,dx
 + \frac{1}{2}\int_{\Pi_T}\sigma^2({\tt T}_{\ell}(u))\beta^{\prime\prime}({\tt T}_{\ell}(u))\psi(t,x)\,dx\,dt\notag \\
  &  + \int_{\Pi_T} \int_{\mathbf{E}} \int_0^1 \eta({\tt T}_{\ell}(u);z)\beta^\prime \big( {\tt T}_{\ell}(u)+ \lambda\,\eta({\tt T}_{\ell}(u);z)\big)\psi(t,x)\,d\lambda\,\widetilde{N}(dz,dt)\,dx  \notag \\
 & + \int_{\Pi_T} \int_{\mathbf{E}}  \int_0^1  (1-\lambda)\eta^2({\tt T}_{\ell}(u);z)\beta^{\prime\prime} \big( {\tt T}_{\ell}(u)+ \lambda\,\eta({\tt T}_{\ell}(u);z)\big)
 \psi(t,x)\,d\lambda\,m(dz)\,dx\,dt 
\end{align*}
  holds for $\mathbb{P}$-almost surely. Together with \eqref{eq:total-mass}, we conclude that the entropy solution $u$ of \eqref{eq:stoc} for the bounded and integrable initial data $u_0$ is indeed a renormalized stochastic entropy solution of  \eqref{eq:stoc}  in the sense of Definition \ref{defi:renormalized}.
  
  \subsection{Existence of renormalized stochastic entropy solution for integrable initial datum}\label{subsec:existence-integrable-ini}
  For $n>1$, consider the following problem:
  \begin{align}\label{eq:stoch-truncation}
  \begin{cases} 
\displaystyle du_n(t,x)- \mbox{div} f(u_n) \,dt-\Delta \Phi(u_n)\,dt =
\sigma(u_n)\,dW(t) + \int_{\mathbf{E}} \eta(u_n;z)\widetilde{N}(dz,dt),~~(t,x) \in \Pi_T, \\
u_n(0,x) = {\tt T}_n(u_0(x)),~~ x\in \R^d,
\end{cases}
\end{align}
Note that, since $u_0\in L^1(\R^d)$, for each fixed $n\in \mathbb{N}$, ${\tt T}_n(u_0) \in L^\infty(\R^d)\cap L^1(\R^d)$, and therefore, equation \eqref{eq:stoch-truncation}
has a unique entropy solution $u_n$. Moreover,  in view of \eqref{inq:contraction-entropy-solun} of Theorem \ref{thm:L1-contraction-entropy}, we see that for all $t\in (0,T)$, and $n_1, n_2\in \mathbb{N},$
\begin{align*}
\mathbb{E}\Big[\|u_{n_1}(t)-u_{n_2}(t)\|_{L^1(\R^d)}\Big] \le C \| {\tt T}_{n_1}(u_0)-{\tt T}_{n_2}(u_0)\|_{L^1(\R^d)}\,.
\end{align*}
Since ${\tt T}_n(u_0)\goto u_0$ in $L^1(\R^d)$, from the above estimate, we see that $\{u_n\}$ is a Cauchy sequence in $L^1(\Omega\times \Pi_T)$. Hence there exists a $\{\mathcal{F}_t\}$-predictable process $\bar{u}$ taking values in $L^1(\R^d)$ such that
\begin{align}
u_n \goto \bar{u}~\text{ pointwise and in}~~ L^1(\Omega\times \Pi_T). \label{conv:truncation}
\end{align}
  In view of the analysis done in subsection \ref{subsec:1-existence}, each $u_n$ is a renormalized stochastic entropy solution of \eqref{eq:stoc} with initial condition ${\tt T}_n(u_0)$. Hence, for each $T>0$ and $\ell>0$
  \begin{align}
  \sup_{0\le t\le T}\mathbb{E}\Big[\|u_n(t)\|_{L^1(\R^d)}\Big] \le \|u_0\|_{L^1(\R^d)}, \quad  \nabla {\tt G}({\tt T}_\ell(u_n))\in L^2((0,T)\times \Omega; L^2(\R^d))\,. \label{esti:apriori-truncation}
  \end{align}
  Furthermore,  for any $\ell >0$, and a given non-negative test function  $\psi\in C_{c}^{1,2}([0,\infty )\times\R^d) $ and  convex entropy flux triple $(\beta,\zeta,\nu)$ with  $\beta^\prime(0)=0$ and $|\beta^\prime|$ bounded by some constant $K>0$, there holds
 \begin{align}
 &\mathbb{E}\Big[ {\bf 1}_B  \int_{\Pi_T} \Big\{ \beta({\tt T}_{\ell}(u_n)) \partial_t\psi(t,x) +  \nu({\tt T}_{\ell}(u_n))\Delta \psi(t,x) -  \grad \psi(t,x)\cdot \zeta({\tt T}_{\ell}(u_n)) \Big\}dx\,dt \Big]\notag \\
 & +\mathbb{E}\Big[ {\bf 1}_B \int_{\Pi_T} \sigma({\tt T}_{\ell}(u_n))\beta^\prime ({\tt T}_{\ell}(u_n))\psi(t,x)\,dW(t)\,dx\Big]
 + \frac{1}{2} \mathbb{E}\Big[ {\bf 1}_B \int_{\Pi_T}\sigma^2({\tt T}_{\ell}(u_n))\beta^{\prime\prime} ({\tt T}_{\ell}(u_n))\psi(t,x)\,dx\,dt\Big] \notag \\
  &  +  \mathbb{E}\Big[ {\bf 1}_B \int_{\Pi_T} \int_{\mathbf{E}} \int_0^1 \eta({\tt T}_{\ell}(u_n);z)\beta^\prime \big({\tt T}_{\ell}(u_n) + \lambda\,\eta({\tt T}_{\ell}(u_n);z)\big)\psi(t,x)\,d\lambda\,\widetilde{N}(dz,dt)\,dx\Big]  \notag \\
 & +\mathbb{E}\Big[ {\bf 1}_B \int_{\Pi_T} \int_{\mathbf{E}}  \int_0^1  (1-\lambda)\eta^2({\tt T}_{\ell}(u_n);z)\beta^{\prime\prime} \big({\tt T}_{\ell}(u_n) + \lambda\,\eta({\tt T}_{\ell}(u_n);z)\big)
 \psi(t,x)\,d\lambda\,m(dz)\,dx\,dt\Big] \notag \\
 &  \ge \mathbb{E}\Big[ {\bf 1}_B \int_{\Pi_T} \beta^{\prime\prime}({\tt T}_{\ell}(u_n)) |\grad {\tt G}({\tt T}_{\ell}(u_n))|^2\psi(t,x)\,dx\,dt\Big] -  \mathbb{E}\Big[ {\bf 1}_B \int_{\R^d} \beta({\tt T}_{\ell}({\tt T}_n(u_0)))\psi(0,x)\,dx\Big] \notag \\
 &\qquad  -  \mathbb{E}\Big[ {\bf 1}_B \int_{\Pi_T} \psi(t,x)\,d\mu_{\ell,n}^K(t,x)\Big]\,,\label{inq:renormalized-truncation}
 \end{align}
 where $\mu_{\ell, n}^K$ is the corresponding renormalized measure for $u_n$. Thanks to the assumption \ref{A4}, \ref{A5} and \ref{A7}, one has  
  \begin{align}
  \mathbb{E}\Big[ \mu_{\ell, n}^K(\Pi_T)\Big]& \le  K\int_{\{|{\tt T}_n(u_0)|> \ell\}} |{\tt T}_n(u_0(x))|\,dx +  \limsup_{\delta \goto 0} \frac{K}{\delta} \mathbb{E}\Big[ \iint_{ \{\ell < |u_n|< \ell + \delta\}} \sigma^2(u_n)\,dx\,ds\Big] \notag \\ 
& +   \limsup_{\delta \goto 0} \frac{K}{\delta} \mathbb{E}\Big[\int_{|z|>0}\int_0^1 \iint_{\{\ell < |u_n + \lambda \eta(u_n;z)|< \ell + \delta\}}  (1-\lambda) \eta^2(u_n;z)\,d\lambda\,m(dz)\,dx\,ds\Big] \notag \\
& \le K\int_{\{|{\tt T}_n(u_0)|> \ell\}} |{\tt T}_n(u_0(x))|\,dx +  \limsup_{\delta \goto 0} \frac{K\sigma_b\,L_\sigma}{\delta} \mathbb{E}\Big[ \iint_{ \{\ell < |u_n|< \ell + \delta\}}|u_n|\,dx\,ds\Big] \notag \\ 
& +   \limsup_{\delta \goto 0} \frac{K\eta_b\, L_\eta}{\delta} \mathbb{E}\Big[\int_{|z|>0}\int_0^1 \iint_{\{\ell < |u_n + \lambda \eta(u_n;z)|< \ell + \delta\}}|u_n||h(z)|^2\,d\lambda\,m(dz)\,dx\,ds\Big] \notag \\
& \le K\int_{\{|(u_0)|> \ell\}} |u_0(x)|\,dx +  \limsup_{\delta \goto 0} \frac{K\sigma_b\,L_\sigma}{\delta} \mathbb{E}\Big[ \iint_{ \{\ell < |\bar{u}|< \ell + \delta\}}|\bar{u}|\,dx\,ds\Big] \notag \\ 
& +   \limsup_{\delta \goto 0} \frac{K\eta_b\, L_\eta}{\delta} \mathbb{E}\Big[\int_{|z|>0}\int_0^1 \iint_{\{\ell < |\bar{u} + \lambda \eta(\bar{u};z)|< \ell + \delta\}}|\bar{u}||h(z)|^2\,d\lambda\,m(dz)\,dx\,ds\Big]:=\bar{\mathcal{E}}(\ell)\,, \label{inq:dissipation-measure}
  \end{align}
  where in the last inequality, we have used \eqref{conv:truncation}.  Hence there exists a random nonnegative bounded Radon measure $\bar{\mu}_{\ell}^K$ on $\Pi_T$ such that, $\mathbb{P}$-a.s.,  
  \begin{align} \label{conv:radon-measure-general-}
 \mu_{\ell, n}^K \overset{\ast}{\rightharpoonup} \bar{\mu}_{\ell}^K~~\text{ in the sense of measure on $\Pi_T$ as $n \goto \infty.$}
    \end{align}
Moreover, thanks to \cite[Lemma $6$]{Blanchard-2001}, there exists a subsequence $\{{\ell}_k\}$ with $\ell_k\goto \infty $ as $k\goto \infty$ such that 
\begin{align}
  \lim_{k\goto \infty} \mathbb{E}\Big[\bar{\mu}_{\ell_k}^K(\Pi_T)\Big]=0\,. \label{eq:total-mass-general}
  \end{align}
 Equipped with \eqref{conv:radon-measure-general-}, \eqref{eq:total-mass-general}, \eqref{esti:apriori-truncation} and the strong convergence of $u_n$ to $\bar{u}$, one can send the limit as $n\goto \infty$  in \eqref{inq:renormalized-truncation}~(similar argument as done for the viscous approximations $\{u_\eps\}$) to conclude that the strong limit $\bar{u}$ of the sequence $\{u_n\}$ is indeed a renormalized stochastic entropy solution of the problem \eqref{eq:stoc}.

  \section{Uniqueness of renormalized  stochastic entropy solution}
  This section is devoted to prove Theorem \ref{thm:uni}. We first compare any renormalized solution and the viscous solution associated to the bounded and integrable initial datum based on stochastic version of Kruzkov's doubling the variables technique to derive a Kato type inequality and then using the Kato inequality for entropy solution $u_n$ as introduced in Subsection \ref{subsec:existence-integrable-ini}, we prove that any renormalized stochastic entropy solution of the underlying problem is the  $L^1$-limit of the sequence $\{u_n\}$.
  \subsection{Derivation of Kato type inequality}
  We  first introduce the following special  function. Let $\beta:\R \rightarrow \R$ be a $C^\infty$ function satisfying 
    \begin{align*}
          \beta(0) = 0,~~ \beta(-r)= \beta(r),~~ \beta^\prime(-r) = -\beta^\prime(r),~~ \beta^{\prime\prime} \ge 0, \quad 
    \beta^\prime(r)=
    \begin{cases} 
    -1,\quad &\text{if} ~ r\le -1,\\
    \in [-1,1], \quad &\text{if}~ |r|<1,\\
    +1, \quad &\text{if} ~ r\ge 1.
    \end{cases}
    \end{align*}
     For any $\xi > 0$, define  $\beta_\xi:\R \rightarrow \R$ by $
             \beta_\xi(r) := \xi \beta(\frac{r}{\xi})$. Then
    \begin{align*}
     |r|-M_1\xi \le \beta_\xi(r) \le |r|\quad \text{and} \quad |\beta_\xi^{\prime\prime}(r)| \le \frac{M_2}{\xi} {\bf 1}_{|r|\le \xi},
    \end{align*} 
    where $ M_1 := \sup_{|r|\le 1}\big | |r|-\beta(r)\big |,~~M_2 := \sup_{|r|\le 1}|\beta^{\prime\prime} (r)|$.  For $\beta=\beta_\xi$, we define 
\begin{align}
   & F_k^{\beta}(a,b):= \int_b^a \beta'(r-b)f'_k(r)dr, \quad F_k(a,b):= sign(a-b)(f_k(a) - f_k(b))\quad (1\le k\le d), \notag \\
    & F(a,b):= (F_1(a,b), F_2(a,b), ..., F_d(a,b)), \quad \Phi^\beta(a,b):=\int_b^a \beta^\prime(r-b)\Phi^\prime(r)\,dr\,. \notag
\end{align}
Since the limit process of the viscous solution is the unique entropy solution of the underlying problem,  it is crucial to compare any renormalized entropy solution to with viscous solution $v_\eps$ of \eqref{eq:viscous-stoc} with initial condition $v_0\in L^\infty\cap L^1(\R^d)$. Note that $v_\eps \in H^1(\R^d)$ but for our analysis, we need $H^2(\R^d)$-regularity of $v_\eps$.  Following \cite{BaVaWit_2014, BisMajVal-2019},  we need to regularize $v_\eps$ by convolution.  Let $\{\tau_\kappa\}$ be a sequence  of mollifiers in $\R^d$. 
  Since $v_\eps$ is a weak solution of \eqref{eq:viscous-stoc} with initial condition $v_0\in L^\infty\cap L^1(\R^d)$,  $v_\eps \ast \tau_\kappa$ is a solution to the problem 
  
  \begin{align}
&  ( v_\eps \ast \tau_\kappa) -\int_0^t \Delta (\Phi(v_\eps)\ast \tau_\kappa)\,ds= \int_0^t {\rm div}(f(v_\eps)\ast \tau_\kappa)\,ds + \int_0^t \big( \sigma(v_\eps)\ast \tau_\kappa\big)\,dW(s) \notag \\
 & + \int_{\mathbf{E}}\int_0^t \big( \eta(v_\eps;z)\ast \tau_\kappa\big) \widetilde{N}(dz,ds) + \eps \int_0^t \Delta(v_\eps \ast \tau_\kappa)\,ds\quad \text{for a.e.}~t>0,~x\in \R^d\,. \label{eq:viscous-regular}
  \end{align}
  As usual, we use Kru\v{z}kov's technique of doubling the variables to get Kato type inequality. To proceed further, we define the test function in variables $(t,x,s,y)$ as follows. Let $\rho$ and $\varrho$ be the  non-negative, standard mollifiers on $\mathbb{R}$ and $\mathbb{R}^d$ respectively such that $supp(\rho)$ $\subset$ $[-1,0]$ and $supp (\varrho) = $ $\bar{B}_1(0)$, where $\bar{B}_1(0)$ is the closed unit ball. We define
\[\rho_{\delta_0}(r):= \frac{1}{\delta_0}\rho(\frac{r}{\delta_0}), \hspace{3mm} \varrho_\delta(x):= \frac{1}{\delta^d}\varrho(\frac{x}{\delta}), \quad \varphi_{\delta_0, \delta}(t,x,s,y):= \rho_{\delta_0}(t-s)\varrho_\delta(x-y)\psi(t,x),\]
where $\delta, \delta_0 >0$ are constants and $ 0\le \psi \in C_c^{1,2}([0, \infty) \times \mathbb{R}^d)$ is a test function.   Let $0\le J $ be the standard symmetric  mollifier on $\mathbb{R}$ with support in $[-1,1]$. For $l>0$, define $J_l(r): = \frac{1}{l}J(\frac{r}{l})$.

   Let $u(t,x)$ be a renormalized stochastic entropy solution of \eqref{eq:stoc} with initial condition $u_0(x)$.  We multiply the renormalized entropy inequality \eqref{inq:renormalized-entropy-solun} against the convex entropy flux triplet $\big( \beta(\cdot-c), F^{\beta}(\cdot,c), \Phi^{\beta}(\cdot,c)$ and the test function $\varphi_{\delta_0, \delta}(t,x,s,y)$, by $J_{l}(v_\eps\ast \tau_\kappa(s,y)-c)$ for $c\in \R$, and then integrate with respect to $s, y$ and $c$. Taking  expectation in the resulting expressions, we get
  \begin{align}
 &  \mathbb{E}\Big[ \int_{\Pi_T} \int_{\R}\int_{\Pi_T} \beta({\tt T}_{\ell}(u)-c) \partial_t \varphi_{\delta_0, \delta}(t,x,s,y) J_{l}(v_\eps\ast \tau_\kappa(s,y)-c)\,dx\,dt\,dy\,ds\,dc\Big] \notag \\
 & + \mathbb{E}\Big[ \int_{\Pi_T} \int_{\R}\int_{\Pi_T}  \Phi^\beta({\tt T}_{\ell}(u),c)\Delta_x  \varphi_{\delta_0, \delta}  J_{l}(v_\eps\ast \tau_\kappa(s,y)-c)\,dx\,dt\,dy\,ds\,dc\Big] \notag \\
& -   \mathbb{E}\Big[ \int_{\Pi_T} \int_{\R}\int_{\Pi_T}  \grad_x  \varphi_{\delta_0, \delta}(t,x,s,y) \cdot F^\beta({\tt T}_{\ell}(u),c) J_{l}(v_\eps\ast \tau_\kappa(s,y)-c)\,dx\,dt\,dy\,ds\,dc\Big] \notag \\
 & +  \mathbb{E}\Big[ \int_{\Pi_T} \int_{\R} \int_{\Pi_T} \sigma({\tt T}_{\ell}(u))\beta^\prime ({\tt T}_{\ell}(u)-c)\varphi_{\delta_0, \delta}(t,x,s,y) \,dW(t)\,dx J_{l}(v_\eps\ast \tau_\kappa(s,y)-c)\,dy\,ds\,dc\Big] \notag \\
 & + \frac{1}{2} \mathbb{E}\Big[ \int_{\Pi_T} \int_{\R} \int_{\Pi_T}\sigma^2({\tt T}_{\ell}(u))\beta^{\prime\prime} ({\tt T}_{\ell}(u)-k)\varphi_{\delta_0, \delta}(t,x,s,y) \,dx\,dt J_{l}(v_\eps\ast \tau_\kappa(s,y)-c)\,dy\,ds\,dc\Big] \notag \\
  &  + \mathbb{E}\Big[ \int_{\Pi_T} \int_{\R}  \int_{\Pi_T} \int_{\mathbf{E}} \int_0^1 \eta({\tt T}_{\ell}(u);z)\beta^\prime \big({\tt T}_{\ell}(u) + \lambda\,\eta({\tt T}_{\ell}(u);z)-c\big)\varphi_{\delta_0, \delta}(t,x,s,y)\,d\lambda\,\widetilde{N}(dz,dt)\,dx\notag \\
  & \hspace{4cm} \times   J_{l}(v_\eps\ast \tau_\kappa(s,y)-c)\,dy\,ds\,dc\Big] \notag \\
 & + \mathbb{E}\Big[ \int_{\Pi_T} \int_{\R} \int_{\Pi_T} \int_{\mathbf{E}}  \int_0^1  (1-\lambda)\eta^2({\tt T}_{\ell}(u);z)\beta^{\prime\prime} \big({\tt T}_{\ell}(u) + \lambda\,\eta({\tt T}_{\ell}(u);z)-c\big) \varphi_{\delta_0, \delta}(t,x,s,y)
 \,d\lambda\,m(dz)\,dx\,dt\notag \\
 & \hspace{4cm} \times  J_{l}(v_\eps\ast \tau_\kappa(s,y)-c)\,dy\,ds\,dc\Big] \notag \\
 &  \ge  \mathbb{E}\Big[ \int_{\Pi_T} \int_{\R} \int_{\Pi_T} \beta^{\prime\prime}({\tt T}_{\ell}(u)-c) |\grad {\tt G}({\tt T}_{\ell}(u))|^2 \varphi_{\delta_0, \delta}(t,x,s,y)\,dx\,dt \,J_{l}(v_\eps\ast \tau_\kappa(s,y)-c)\,dy\,ds\,dc\Big] \notag \\
 &  -  \mathbb{E}\Big[ \int_{\Pi_T} \int_{\R} \int_{\R^d} \beta({\tt T}_{\ell}(u_0)-c)\,\varphi_{\delta_0, \delta}(0,x,s,y)\,dx \,J_{l}(u_\eps\ast \tau_\kappa(s,y)-c)\,dy\,ds\,dc\Big] \notag \\
 &   - \mathbb{E}\Big[ \int_{\Pi_T} \int_{\R}   \int_{\Pi_T} \varphi_{\delta_0, \delta}(t,x,s,y) \,d\mu_{\ell}^u(t,x)\,J_{l}(v_\eps\ast \tau_\kappa(s,y)-c)\,dy\,ds\,dc\Big]\,, \label{inq:1-uni}
 \end{align}
 where $\mu_{\ell}^u$ is the corresponding renormalized measure for $u$. There exists a sequence $\{\ell_i\}$ with $\ell_{i}\goto \infty$ as $i\goto \infty$ such that 
 \begin{align}
 \lim_{i\goto \infty} \mathbb{E}\Big[ \mu_{\ell_i}^u (\Pi_T)\Big]=0\,. \label{lim:renormalized-measure-uni}
 \end{align}

 We apply It\^o-L\'evy formula to \eqref{eq:viscous-regular},  multiply by $J_l({\tt T}_{\ell}(u(t,x)) - c)$ and then integrate with respect to $t, x$ and $c$ and take expectation in the resulting expression. The result is 
 \begin{align}
 & \eps \mathbb{E}\Big[  \int_{\Pi_T} \int_{\R} \int_{\R^d} \beta^{\prime\prime}(u_\eps \ast \tau_\kappa-c) |\nabla(u_\eps \ast \tau_\kappa)|^2\varphi_{\delta_0, \delta}(t,x,s,y)\,ds\,dy \, J_l({\tt T}_{\ell}(u(t,x)) - c)\,dx\,dt\,dc\Big] \notag \\
 & + \mathbb{E}\Big[  \int_{\Pi_T} \int_{\R} \int_{\R^d} \beta^{\prime\prime}(u_\eps \ast \tau_\kappa-c) \nabla( \Phi(u_\eps) \ast \tau_\kappa)\cdot \nabla (u_\eps \ast \tau_\kappa)\varphi_{\delta_0, \delta}\,ds\,dy \, J_l({\tt T}_{\ell}(u(t,x)) - c)\,dx\,dt\,dc\Big] \notag \\
 & \le \mathbb{E}\Big[  \int_{\Pi_T} \int_{\R} \int_{\R^d} \beta(u_\eps \ast \tau_\kappa(0,y)-c)\,\varphi_{\delta_0, \delta}(t,x,0,y)\,dy J_l({\tt T}_{\ell}(u(t,x)) - c)\,dx\,dt\,dc\Big] \notag \\
 & +  \mathbb{E}\Big[  \int_{\Pi_T} \int_{\R} \int_{\Pi_T} \beta(u_\eps \ast \tau_\kappa(s,y)-c) \partial_s \varphi_{\delta_0, \delta}(t,x,s,y)\,ds\,dy \, J_l({\tt T}_{\ell}(u(t,x)) - c)\,dx\,dt\,dc\Big] \notag \\
 & -  \mathbb{E}\Big[  \int_{\Pi_T} \int_{\R} \int_{\Pi_T} \beta^{\prime}(u_\eps \ast \tau_\kappa-c) (f(u_\eps)\ast \tau_\kappa)\cdot \nabla_y \varphi_{\delta_0, \delta}(t,x,s,y)\,ds\,dy \, J_l({\tt T}_{\ell}(u(t,x)) - c)\,dx\,dt\,dc\Big] \notag \\
 & -  \mathbb{E}\Big[  \int_{\Pi_T} \int_{\R} \int_{\Pi_T} \beta^{\prime\prime}(u_\eps \ast \tau_\kappa-c) (f(u_\eps)\ast \tau_\kappa)\cdot \nabla_y(u_\eps \ast \tau_\kappa) \varphi_{\delta_0, \delta}\,ds\,dy \, J_l({\tt T}_{\ell}(u(t,x)) - c)\,dx\,dt\,dc\Big] \notag \\
 &-  \mathbb{E}\Big[  \int_{\Pi_T} \int_{\R} \int_{\Pi_T} \beta^{\prime}(u_\eps \ast \tau_\kappa-c)\nabla(\Phi(u_\eps)\ast \tau_\kappa)\cdot \nabla_y \varphi_{\delta_0, \delta}\,ds\,dy \, J_l({\tt T}_{\ell}(u(t,x)) - c)\,dx\,dt\,dc\Big] \notag \\
 & - \eps  \mathbb{E}\Big[  \int_{\Pi_T} \int_{\R} \int_{\Pi_T} \beta^{\prime}(u_\eps \ast \tau_\kappa-c) \nabla_y (u_\eps \ast \tau_\kappa)\cdot \nabla_y \varphi_{\delta_0, \delta}\,ds\,dy \, J_l({\tt T}_{\ell}(u(t,x)) - c)\,dx\,dt\,dc\Big] \notag \\
 & + \mathbb{E}\Big[  \int_{\Pi_T} \int_{\R} \int_{\Pi_T}  \sigma(u_\eps \ast \tau_\kappa) \beta^{\prime}(u_\eps \ast \tau_\kappa-c) \varphi_{\delta_0, \delta}\,dW(s)\,dy\, \, J_l({\tt T}_{\ell}(u(t,x)) - c)\,dx\,dt\,dc\Big] \notag \\
 & + \frac{1}{2}  \mathbb{E}\Big[  \int_{\Pi_T} \int_{\R} \int_{\Pi_T}  \sigma^2(u_\eps \ast \tau_\kappa) \beta^{\prime\prime}(u_\eps \ast \tau_\kappa-c) \varphi_{\delta_0, \delta}\,ds\,dy \, J_l({\tt T}_{\ell}(u(t,x)) - c)\,dx\,dt\,dc\Big] \notag \\
 &  + \mathbb{E}\Big[  \int_{\Pi_T} \int_{\R} \int_{\Pi_T} \int_{\mathbf{E}}\int_0^1 (\eta(u_\eps;z)\ast \tau_\kappa) \beta^{\prime}(u_\eps \ast \tau_\kappa + \lambda(\eta(u_\eps;z)\ast \tau_\kappa)  -c) \notag \\
 & \hspace{4cm} \times  \varphi_{\delta_0, \delta} \widetilde{N}(dz,ds)\,d\lambda \,dy  \, J_l({\tt T}_{\ell}(u(t,x)) - c)\,dx\,dt\,dc\Big] \notag \\
 & + \mathbb{E}\Big[  \int_{\Pi_T} \int_{\R} \int_{\Pi_T} \int_{\mathbf{E}}\int_0^1(1-\lambda) (\eta(u_\eps;z)\ast \tau_\kappa)^2 \beta^{\prime\prime}(u_\eps \ast \tau_\kappa + \lambda(\eta(u_\eps;z)\ast \tau_\kappa)  -c) \notag \\
 & \hspace{4cm} \times  \varphi_{\delta_0, \delta} \,m(dz)\,ds\,d\lambda \,dy  \, J_l({\tt T}_{\ell}(u(t,x)) - c)\,dx\,dt\,dc\Big] \,. \label{inq:2-uni}
 \end{align}
 We now add \eqref{inq:1-uni} and \eqref{inq:2-uni} and look for the passage to the limit with respect to the different approximation
parameters. Note that for fixed $\ell>0$, ${\tt T}_{\ell}(u)\in L^\infty(\Omega\times \Pi_T)$. Moreover, we can follow the same line of arguments as invoked in 
\cite{BisMajKarl_2014,BaVaWit_2014,BisMajVal-2019} together with Remark \ref{rem:about-initial} to arrive at the following Kato type inequality: for any  nonnegative function $\phi$ with compact support such that $\phi\in H^1([0,\infty)\times \R^d)$, there holds
\begin{align}
0& \le \mathbb{E}\Big[\int_{\R^d} | v_0(x)-{\tt T}_{\ell}(u_0(x))|\, \phi(0,x)\,dx\Big] + \mathbb{E}\Big[ \int_{\Pi_T} | v(t,x)- {\tt T}_{\ell}(u(t,x))|\partial_t \phi(t,x)\,dx\,dt\Big]\notag \\
& - \mathbb{E}\Big[ \int_{\Pi_T}  F\big( v(t,x), {\tt T}_{\ell}(u(t,x))\big)\cdot \nabla \phi(t,x)\,dx\,dt\Big] + \mathbb{E}\Big[ \int_{\Pi_T} \phi(t,x)\,d\mu_{\ell}^u(t,x)\Big]
 \notag \\
& -  \mathbb{E}\Big[ \int_{\Pi_T}  \nabla \big( \Phi(v(t,x))-\Phi({\tt T}_{\ell}(u(t,x)))\big) \cdot \nabla \phi(t,x)\,dx\,dt\Big]\,, \label{inq:3-uni}
\end{align}
where $v(t,x)$, the strong limit of the sequence $\{v_\eps\}$, is a stochastic entropy solution of \eqref{eq:stoc} with initial condition $v_0\in L^1(\R^d)\cap L^\infty(\R^d)$.

\subsection{Proof of Theorem \ref{thm:uni}}
Let $u(t,x)$ be a renormalized stochastic entropy solution of \eqref{eq:stoc} and $u_n$ be the unique entropy solution of  \eqref{eq:stoch-truncation} with initial condition ${\tt T}_{\ell}(u_0)\in L^1(\R^d)\cap L^\infty(\R^d)$. Then, in view of  {\bf Kato type inequality} \eqref{inq:3-uni}, we have, for any nonnegative function  $\phi\in H^1([0,\infty)\times \R^d)$ with compact support, 
\begin{align}
0& \le \mathbb{E}\Big[\int_{\R^d} | {\tt T}_{n}(u_0(x))-{\tt T}_{\ell}(u_0(x))|\, \phi(0,x)\,dx\Big] + \mathbb{E}\Big[ \int_{\Pi_T} | u_n(t,x)- {\tt T}_{\ell}(u(t,x))|\partial_t \phi(t,x)\,dx\,dt\Big]\notag \\
& - \mathbb{E}\Big[ \int_{\Pi_T}  F\big( u_n(t,x), {\tt T}_{\ell}(u(t,x))\big)\cdot \nabla \phi(t,x)\,dx\,dt\Big] + \mathbb{E}\Big[ \int_{\Pi_T} \phi(t,x)\,d\mu_{\ell}^u(t,x)\Big]
 \notag \\
& -  \mathbb{E}\Big[ \int_{\Pi_T}  \nabla \big( \Phi(u_n(t,x))-\Phi({\tt T}_{\ell}(u(t,x)))\big) \cdot \nabla \phi(t,x)\,dx\,dt\Big]\,. \label{inq:4-uni}
\end{align}
Note that \eqref{inq:4-uni} holds with $\phi(s,x)=\phi_m(x)\phi_h^t(s)$ for every $m\ge 1$, where
\begin{align*}
\phi_m(x)=\begin{cases} 1,\quad \text{if}~~~|x|<m, \\ \frac{m^a}{|x|^a},\quad \text{if}~~~|x|>m, \end{cases}
\end{align*}
where $a:=\frac{d}{2} + \theta$ for some $\theta>0$ such that $\phi_m\in L^2(\R^d)$, and for each $h>0$ and fixed $t\ge 0$,
\begin{align*}
\phi_h^t(s)=\begin{cases} 1,\quad \text{if}~~~s\le t\,, \\ \frac{h-s+t}{h}, \quad \text{if}~~~t\le s\le t+h\,, \\0, \quad \text{if}~~~s>t+h\,. \end{cases}
\end{align*}
Following \cite[pages $836$-$837$]{BisMajVal-2019}, we get , for a.e. $t\ge 0$
\begin{align}
& \mathbb{E}\Big[ \int_{\R^d} \big| u_n(t,x)-{\tt T}_{\ell}(u(t,x))\big| \phi_m(x)\,dx\Big] -\mathbb{E}\Big[ \int_0^t \int_{\R^d} d\mu_{\ell}^u(s,x)\Big]\notag \\
& \le C \mathbb{E}\Big[ \int_0^t \int_{\R^d} 
\big| u_n(s,x)-{\tt T}_{\ell}(u(s,x))\big| \phi_m(x)\,dx\,ds\Big] + \mathbb{E}\Big[\int_{\R^d}\big| {\tt T}_{n}(u_0(x))-{\tt T}_{\ell}(u_0(x))\big|\, \phi_m(x)\,dx\Big]\,.  \label{inq:5-uni}
\end{align}
Keeping \eqref{lim:renormalized-measure-uni} in mind, we send $\ell \goto \infty$~(along the subsequence) and then $m\goto \infty$ in \eqref{inq:5-uni} to have for a.e. $t\ge 0$,
\begin{align}
& \mathbb{E}\Big[ \int_{\R^d} \big| u_n(t,x)-u(t,x)\big|\,dx\Big] 
 \le C \mathbb{E}\Big[ \int_0^t \int_{\R^d} 
\big| u_n(s,x)-u(s,x)\big|\,dx\,ds\Big] + \mathbb{E}\Big[\int_{\R^d}\big| {\tt T}_{n}(u_0)-u_0\big|\,dx\Big]\,.  \label{inq:6-uni}
\end{align}
  An application of weaker version of Gronwall's  inequality together with the fact that $\{u_n\}$ converges in $L^1(\Omega\times \Pi_T)$ to $\bar{u}$, a renormalized stochastic entropy solution of \eqref{eq:stoc} and ${\tt T}_n(u_0)$ converges to $u_0$ in $L^1(\R^d)$, we get from \eqref{inq:6-uni}, $\mathbb{P}$-a.s.,
  $$ \bar{u}(t,x)=u(t,x),\quad \text{a.e. $(t,x)\in \Pi_T$}.$$
  In other words, renormalized stochastic entropy solution of \eqref{eq:stoc} is unique. This completes the proof of Theorem \ref{thm:uni}.

  \vspace{0.5cm}

 \noindent{\bf Acknowledgement:} 
The first author would like to acknowledge the financial support by CSIR, India.

\vspace{.2cm}

\noindent{\bf Data availability:}  Data sharing does not apply to this article as no data sets were generated or analyzed during the current study.

\vspace{0.2cm}

\noindent{\bf Conflict of interest:}  The authors have not disclosed any competing interests.

 \vspace{0.5cm}


\end{document}